\theoremstyle{theorem}
\newtheorem{thm}{Theorem}
\newtheorem{lemma}{Lemma}
\newtheorem*{prethm}{Theorem}
\newtheorem{prop}{Proposition}
\theoremstyle{definition}
\newtheorem*{remark}{Remark}
\newtheorem{ex}{Example}
\newcommand{\Z}{\ensuremath{\mathbb Z}}
\newcommand{\isom}{\cong}
\newcommand{\acts}{\curvearrowright}
\renewcommand{\>}{\ensuremath{\rightarrow}}
\newcommand{\pr}{^{\prime}}
\newcommand{\del}{\partial}
\renewcommand{\part}[2]{\frac{\del{#1}}{\del{#2}}}
\begin{document}

\title{Hyperbolicity of the Cyclic Splitting Complex}

\author{Brian Mann}
\address{\tt Department of Mathematics, University of Utah, 155 S 1400 E, Room 233, Salt Lake City, Utah 84112, USA}
  \email{\tt
  mann@math.utah.edu} 

\maketitle

\begin{abstract} 
We define a new complex on which $Out(F_n)$ acts by simplicial automorphisms, the \emph{cyclic splitting complex of $F_n$}, and show that it is hyperbolic using a method developed by Kapovich and Rafi in \cite{KapovichRafi}.
\end{abstract}

\section{Introduction}

Let $S$ be a hyperbolic surface, perhaps with non-empty boundary or punctures. The \emph{curve complex} $\mathcal{C}(S)$ associated to $S$ is a simplicial complex whose 1-skeleton is defined by taking vertices to be homotopy classes of essential simple close curves in $S$, and where two vertices are joined by an edge if they can be represented by disjoint curves in the surface. 

A celebrated theorem of Masur-Minsky (see \cite{MasurMinsky}) is that
\begin{prethm}[Masur-Minsky]
The curve complex $\mathcal{C}(S)$ is $\delta$-hyperbolic.
\end{prethm}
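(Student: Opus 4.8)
The plan is to move the problem into Teichm\"uller space $\mathcal{T}(S)$, where the relevant hyperbolic-like behavior can be extracted from the geometry of quadratic differentials, and then transport the conclusion back to $\mathcal{C}(S)$. First I would build a coarse comparison map $\Phi\colon \mathcal{T}(S)\to\mathcal{C}(S)$ sending a marked hyperbolic structure $X$ to (a choice of) a curve of shortest extremal length on $X$. Using collar-lemma estimates -- a very short curve forces transverse curves to be very long -- one checks that $\Phi$ is coarsely Lipschitz for the Teichm\"uller metric and coarsely surjective. Consequently, computing distances in $\mathcal{C}(S)$ up to uniform additive and multiplicative error reduces to understanding the $\Phi$-images of Teichm\"uller geodesics.

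Given vertices $\alpha,\beta$, I would realize them as short curves on points $X,Y\in\mathcal{T}(S)$, join $X$ to $Y$ by the unique Teichm\"uller geodesic $G$, and consider the path $\Phi(G)\subset\mathcal{C}(S)$. Two properties of this family of paths must be established. First, each $\Phi(G)$ is an unparametrized quasigeodesic that makes definite combinatorial progress: along $G$ the extremal length of a fixed curve $\gamma$ behaves, up to bounded factors, like $\max(e^{2t}h_\gamma,\,e^{-2t}v_\gamma)$ where $h_\gamma,v_\gamma$ are its intersection numbers with the horizontal and vertical foliations of the defining quadratic differential, so $\gamma$ can be short only in a bounded window around its balance time -- unless both $h_\gamma$ and $v_\gamma$ are small, which is impossible because the two foliations fill. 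This, together with Kerckhoff's formula for the variation of extremal length, gives uniform quasigeodesity and forces distinct parts of $\Phi(G)$ to be supported on curves that intersect a lot, hence are far apart in $\mathcal{C}(S)$.

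The second, and genuinely hard, property is a contraction (coarse thinness) statement: the nearest-point projection of $\mathcal{C}(S)$ onto $\Phi(G)$ is coarsely well defined, and any ball disjoint from a bounded neighborhood of $\Phi(G)$ has image of uniformly bounded diameter. The proof rests on the quantitative geometry of quadratic differentials: if some curve is simultaneously short at two widely separated times along $G$, its horizontal and vertical measures must both be small, contradicting the filling property -- and a similar argument controls how curves that are short off of $G$ must ``shadow'' a bounded stretch of $\Phi(G)$. I expect this contraction estimate to be the main obstacle; the comparison map is soft, and the progress estimate is essentially a repackaging of classical Teichm\"uller theory, but the contraction property is where the real analytic input lives.

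Finally I would invoke a general hyperbolicity criterion -- the one implicit in this argument, later cleanly axiomatized (e.g.\ Bowditch's ``guessing geodesics'' lemma): a connected geodesic (or coarsely geodesic) metric space equipped with a family of paths between all pairs of points which are uniform unparametrized quasigeodesics and satisfy the above contraction/thinness condition is $\delta$-hyperbolic, with $\delta$ depending only on the constants. Applying this to the family $\{\Phi(G)\}$ yields that $\mathcal{C}(S)$ is $\delta$-hyperbolic. (An alternative route, avoiding Teichm\"uller theory entirely, is to verify the same combinatorial thinness axioms directly for explicitly constructed paths built by arc surgery; I mention it only as a cross-check, since the Teichm\"uller argument is the one most relevant to the projection techniques used later in this paper.)
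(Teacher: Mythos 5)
This theorem appears in the paper only as quoted background, attributed to \cite{MasurMinsky}; the paper contains no proof of it, so the comparison is with the literature. Your outline is, in structure, exactly the original Masur--Minsky argument: the coarse systole map $\Phi\colon \mathcal{T}(S)\to\mathcal{C}(S)$, the family of paths $\Phi(G)$ over Teichm\"uller geodesics, extremal-length estimates from the defining quadratic differential, a contraction property for the coarse projection onto $\Phi(G)$, and a thin-triangles/guessing-geodesics criterion of the kind the present paper records as Theorem~1. So the route is the correct and standard one.

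Two things keep the proposal from being a proof. First, the contraction estimate --- which you rightly identify as the crux --- is only asserted, and the heuristic you offer for it is not correct as stated. For a fixed curve $\gamma$ and a unit-area quadratic differential whose horizontal and vertical foliations fill, it is entirely possible for both intersection numbers $h_\gamma$ and $v_\gamma$ to be small simultaneously; that just means $\gamma$ is short in the flat metric, and such a curve remains short over a \emph{long} interval of time, so the ``bounded window around the balance time'' claim fails without further hypotheses. Filling only forbids $h_\gamma=v_\gamma=0$. The actual Masur--Minsky contraction argument is a careful analysis of how curves disjoint from, or having bounded intersection with, a curve that is short at time $t$ constrain the quadratic differential near $t$, together with a comparison of balance times of intersecting curves; this occupies most of their paper and cannot be waved through. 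Second, a smaller slip: the extremal length at time $t$ is bounded below by the flat length squared, i.e.\ by roughly $\max(e^{2t}h_\gamma^2,\,e^{-2t}v_\gamma^2)$ rather than $\max(e^{2t}h_\gamma,\,e^{-2t}v_\gamma)$; this does not change the shape of the argument but these exponents are what the quantitative estimates run on. In short: right strategy, correctly sourced, but the theorem's entire content sits in the contraction lemma you have not established.
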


The mapping class group of $S$, denoted by $Mod(S)$, acts on $\mathcal{C}(S)$ in the obvious manner.

In an attempt to mirror the study of surfaces and their mapping class groups, there have been several complexes defined on which $Out(F_n)$, the group of outer automorphisms of the rank $n$ free group, acts. One such complex is the \emph{free factor complex}, denoted $FF_n$, whose vertices are conjugacy classes of proper free factors and which has the structure of a simplicial complex given by the poset structure on the conjugacy classes of free factors of $F_n$. Bestvina and Feighn have shown in \cite{BestvinaFeighn} that

\begin{prethm}[Bestvina-Feighn]
The free factor complex $FF_n$ is $\delta$-hyperbolic.
\end{prethm}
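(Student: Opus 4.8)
Assume $n \ge 3$; the case $n = 2$, in which $FF_2$ is the Farey graph, is classical. The plan is to transfer hyperbolicity from the \emph{free splitting complex} $FS_n$ --- whose vertices are conjugacy classes of one-edge free splittings of $F_n$ (minimal simplicial $F_n$-trees with trivial edge stabilizers), two splittings spanning an edge when they admit a common refinement --- via a criterion of Kapovich and Rafi \cite{KapovichRafi}, the same method this paper applies to the cyclic splitting complex. The only genuinely hard input is the theorem of Handel and Mosher that $FS_n$ is $\delta$-hyperbolic. The Kapovich-Rafi criterion states: if $X$ is a hyperbolic graph, $Y$ a connected graph, and $\pi \colon X \to Y$ is coarsely Lipschitz, coarsely surjective, and has the property that every geodesic of $X$ whose endpoints have $\pi$-images at $Y$-distance at most $1$ has $\pi$-image of uniformly bounded diameter, then $Y$ is hyperbolic.

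First I would construct the projection $\pi \colon FS_n \to FF_n$ sending a free splitting $T$ to the set of conjugacy classes of its nontrivial vertex stabilizers. Since a graph-of-groups decomposition with trivial edge groups writes $F_n$ as a free product of its vertex groups with a free group, each such stabilizer is a proper free factor, so $\pi(T)$ is a nonempty set of vertices of $FF_n$; moreover, for $n \ge 3$, any two vertex groups of a single splitting are joined in $FF_n$ by a uniformly bounded path, so $\pi$ is coarsely well defined. Coarse surjectivity is immediate, since any proper free factor $A$ is a vertex group of the one-edge splitting $F_n = A \ast B$. For coarse Lipschitzness it suffices to bound $d_{FF_n}(\pi(T), \pi(T'))$ when $T$ and $T'$ have a common refinement $\hat T$: collapsing a forest only merges vertex groups into free products, so every nontrivial vertex group of $T$, and of $T'$, contains a nontrivial vertex group of $\hat T$ and hence lies at $FF_n$-distance at most $1$ from a point of $\pi(\hat T)$; combined with the diameter bound on $\pi(\hat T)$, this gives a uniform bound.

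It remains to verify the third and substantive hypothesis. Let $\gamma$ be a geodesic of $FS_n$ with endpoints $T, T'$ satisfying $d_{FF_n}(\pi(T), \pi(T')) \le 1$; then some proper free factor $A$ is conjugate into a vertex group of both $T$ and $T'$. Because $FS_n$ is hyperbolic and folding paths are unparametrized quasigeodesics in it, $\gamma$ fellow-travels a folding path $(T_t)$ joining suitable blow-ups of $T$ and $T'$, and it is enough to bound $\pi$ along $(T_t)$. The crux is a carrying estimate: that $d_{FF_n}([A], \pi(T_t))$ stays uniformly bounded in $t$. This I would obtain by tracking, along the folding path, the core of the cover of $T_t / F_n$ determined by the subgroup $A$: its rank and embeddedness evolve in a controlled, roughly unimodal way --- this is the heart of the folding-path analyses of Bestvina-Feighn \cite{BestvinaFeighn} and Handel-Mosher --- so the fact that this core is an embedded core of rank at most $n - 1$ near both ends forces $[A]$ to remain uniformly close to $\pi(T_t)$ throughout. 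Granting the estimate, $\pi(\gamma)$ lies in a bounded neighborhood of $[A]$, which supplies the constant the criterion demands, whence $FF_n$ is $\delta$-hyperbolic. The step I expect to be the main obstacle is precisely this carrying estimate for folding paths.
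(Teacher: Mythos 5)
This statement is quoted in the paper as a known theorem of Bestvina--Feighn; the paper gives no proof of it, only a sketch (in the introduction) of the Kapovich--Rafi route: pass to the complex of free bases $FB_n$, show $FB_n$ is quasi-isometric to $FF_n$, and apply the Bowditch-type criterion (Theorem 2) to the natural map $FS_n' \to FB_n$. Your proposal follows the same general criterion but applies it to a direct projection $FS_n \to FF_n$ by vertex stabilizers. The soft parts of your argument (coarse well-definedness for $n \ge 3$, coarse surjectivity, coarse Lipschitzness via common refinements) are all fine.

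The genuine gap is exactly where you flag it: the ``carrying estimate'' that $d_{FF_n}([A],\pi(T_t))$ stays uniformly bounded along a fold path whose endpoints both have $A$ conjugate into a vertex group. The justification you offer --- that the Stallings core of the cover associated to $A$ evolves in a ``controlled, roughly unimodal way,'' so embeddedness at both ends forces closeness throughout --- is an assertion, not an argument; making it precise (via a contraction/projection property for fold paths, or equivalently the statement that projected fold paths behave like unparametrized quasigeodesics in $FF_n$) is essentially the entire technical content of Bestvina--Feighn's paper. Note also a logical subtlety: you cannot literally invoke ``unparametrized quasigeodesic in $FF_n$'' before knowing $FF_n$ is hyperbolic; Bestvina--Feighn first establish a contraction property and only then deduce hyperbolicity. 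So as written, your reduction shifts all the difficulty into an unproved lemma that is at least as hard as the theorem's original proof. The point of the Kapovich--Rafi detour through $FB_n$ --- which this paper's own argument for $FZ_n$ imitates --- is precisely to avoid this: when two roses share a petal label $a$, one can choose the maximal-fold sequence so that an embedded loop labeled $a$ survives in every intermediate graph, and the bounded-image condition then follows by an elementary combinatorial check, with Handel--Mosher's hyperbolicity of $FS_n$ as the only deep input. If you want a complete proof along your lines, you must either import Bestvina--Feighn's projection machinery explicitly or reroute through $FB_n$ as Kapovich--Rafi do.
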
 

Another analogue for the curve complex is the \emph{free splitting complex}, $FS_n$, whose vertices are 1-edge free splittings of $F_n$, and where two vertices are joined by an edge if the two splittings admit a common refinement. A theorem of Handel and Mosher in \cite{HandelMosher} is that

\begin{prethm}[Handel-Mosher]
The free splitting complex $FS_n$ is $\delta$-hyperbolic.
\end{prethm}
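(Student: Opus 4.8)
The plan is to deduce hyperbolicity from a combinatorial \emph{guessing geodesics} criterion rather than from an explicit description of geodesics. Recall the relevant criterion (Bowditch, Masur-Schleimer; in the form used by Bestvina-Feighn): a connected graph $X$ is $\delta$-hyperbolic, with $\delta$ depending only on a constant $M$, provided one can assign to each pair of vertices $\{x,y\}$ a connected subgraph $\mathcal{A}(x,y)=\mathcal{A}(y,x)$ containing $x$ and $y$ such that (a) if $d_X(x,y)\le 1$ then $\operatorname{diam}_X\mathcal{A}(x,y)\le M$, and (b) for all $x,y,z$ one has $\mathcal{A}(x,y)\subseteq N_M\big(\mathcal{A}(x,z)\cup\mathcal{A}(z,y)\big)$. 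So the task reduces to producing, for each pair of one-edge free splittings, a ``coarse geodesic guess'' and checking (a)--(b); connectedness of $FS_n$ is classical.

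To build the guesses I would pass through Culler-Vogtmann Outer space $CV_n$. A marked metric graph $(G,\rho)\in CV_n$ together with a choice of edge $e$ of $G$ determines a one-edge free splitting $[e]_G$, obtained by collapsing the complement of the orbit of $e$ (the collapsed complementary pieces becoming vertices carrying free vertex groups); since no vertex of $G$ has valence $\le 2$, each $[e]_G$ is a genuine vertex of $FS_n$, and any two of them are refined by the two-edge splitting got by collapsing the complement of $\{e,e'\}$, so $\{[e]_G : e\in E(G)\}$ has diameter $\le 2$ in $FS_n$. Hence $(G,\rho)\mapsto[e]_G$ is a coarsely well-defined map $\pi\colon CV_n\to FS_n$, and it is coarsely Lipschitz because moving a bounded distance in $CV_n$ — rescaling, collapsing a forest, or performing one elementary fold — changes the marked topological type, hence $\pi$, by a bounded amount. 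Given one-edge splittings $S$ and $T$, I would pick lifts $x_S,x_T\in CV_n$ with $\pi(x_S)=S$ and $\pi(x_T)=T$, connect them by a Stallings fold path (Skora path) $t\mapsto H_t$, and set $\mathcal{A}(S,T)$ equal to a connected $1$-neighborhood of $\pi(\{H_t\})$. Axiom (a) should then be the easy point: when $S,T$ are adjacent their common refinement is realized by blowing up a single marked graph, so $x_S,x_T$ may be chosen in (or near) one simplex of $CV_n$, on which $\pi$ is essentially constant, and fold paths issuing from a fixed simplex have $\pi$-image of uniformly bounded diameter.

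The substance of the proof, and the step I expect to be the main obstacle, is axiom (b): the thin-triangle inequality for projected fold paths. Concretely one has to show that for any fold path $t\mapsto H_t$ the image $\pi(\{H_t\})$ is an unparametrized quasigeodesic in $FS_n$ with uniform constants, and, more precisely, that a triangle of fold paths fellow-travels after projection. The route I would attempt is a \emph{coarse contraction} estimate of Masur-Minsky type: fix a free splitting $U$, and show that any fold path whose $\pi$-image stays outside the $1$-ball about $U$ in $FS_n$ has $\pi$-image of uniformly bounded diameter. Establishing this forces one into the Stallings-fold combinatorics in an essential way — tracking, across each elementary fold of $H_t$, how the collection of one-edge collapses of $H_t$ (equivalently, how $H_t$ sits relative to $U$, encoded by suitable Whitehead graphs) evolves, and arguing that either the topology of $H_t$ relative to $U$ stabilizes, so the path already passes near $U$, or each fold makes definite progress in $FS_n$. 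This ``folds make progress away from $U$'' phenomenon is the technical heart of Handel-Mosher's argument, and essentially all the difficulty of the theorem is concentrated there; granting it, the criterion above upgrades it mechanically to $\delta$-hyperbolicity of $FS_n$.

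As a fallback I would keep in mind the alternative model via Hatcher's sphere complex: $FS_n$ is quasi-isometric to the complex of essential embedded spheres in the doubled handlebody $\#_n(S^1\times S^2)$, with one-edge splittings corresponding to isotopy classes of spheres and edges to disjointness. There one replaces fold paths by sphere \emph{surgery sequences} — the free-group analogue of the unicorn paths used for the curve complex — and the same criterion applies, Hatcher's normal form for sphere systems doing the work of controlling how a surgery sequence meets a third sphere. Either way the difficulty is localized in the same place, the coarse-Lipschitz and contraction behaviour of the chosen family of coarse geodesics, so I would commit to whichever combinatorics proved more tractable and devote the bulk of the effort to axiom (b).
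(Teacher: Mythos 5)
This statement is not proved in the paper at all: it is quoted as background, attributed to Handel--Mosher, and the paper's argument for its own main theorem (hyperbolicity of $FZ_n$) takes it as an input via Theorem 2 of Kapovich--Rafi. So there is no internal proof to compare against; your proposal has to be judged as an attempted proof of the Handel--Mosher theorem itself.

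As such, it is a reasonable roadmap of the known argument but not a proof, because the entire mathematical content is concentrated in the step you explicitly defer. Your axiom (a) and the construction of the guesses $\mathcal{A}(S,T)$ from fold paths are genuinely the easy part, and your identification of the guessing-geodesics criterion and of Stallings fold paths as the right family of coarse geodesics matches what Handel and Mosher actually do. But axiom (b) --- the statement that projected fold paths form uniformly thin triangles, or equivalently your ``coarse contraction'' estimate relative to a fixed splitting $U$ --- is asserted, not established. The sentence beginning ``This `folds make progress away from $U$' phenomenon is the technical heart\dots; granting it\dots'' concedes the point: what is granted there is the theorem. In Handel--Mosher's actual argument this step occupies essentially the whole paper and is carried out not by a Whitehead-graph bookkeeping of the sort you gesture at, but by constructing combing rectangles (combing fold paths by collapse and by expansion) and proving a projection/retraction property for arbitrary free splittings, not just points of $CV_n$; indeed one structural caveat about your setup is that fold paths must be taken in the space of all free splittings (trees with nontrivial vertex stabilizers), since staying inside $CV_n$ does not give the flexibility needed for the combing argument. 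Without a proof of (b), the proposal reduces a hard theorem to an equally hard unproved claim.
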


A recent paper of Kapovich and Rafi \cite{KapovichRafi} shows that the hyperbolicity of the free splitting complex implies the hyperbolicity of the free factor complex. The outline of their argument goes as follows: the authors define an auxiliary complex $FB_n$ called the \emph{complex of free bases}, whose vertices are conjugacy classes of free bases of $F_n$, and where two conjugacy classes of bases are adjacent if they have representatives which share an element. They show that $FB_n$ and $FF_n$ are quasi-isometric, so it suffices to show that hyperbolicity of $FB_n$ which they do by applying a consequence of Bowditch's work (see Theorem 2 below) in \cite{Bowditch} to the natural map $FS_n\pr \> FB_n$, where $FS_n\pr$ is the barycentric subdivision of $FS_n$. 

\subsection*{The Cyclic Splitting Complex} In this paper, we define another analogue of the curve complex for surfaces, the \emph{cyclic splitting complex}, and show that it is hyperbolic using the technology from \cite{KapovichRafi}.

The cyclic splitting complex is the simplicial complex whose vertices free splittings of $F_n$, and where two free splittings $X$ and $Y$ are connected by an edge if either (1) they are commonly refined (as in $FS_n$) or (2) if there is an element $w$ in the vertex groups of $X$ and $Y$ and a $\Z$-splitting $T$ with edge group $\langle w \rangle$ such that $T$ can be obtained from $X$ and $Y$ by ``folding" $\langle w \rangle$ over the trivial edge groups. There is a natural map $FS_n \> FZ_n$.

It is worth noting why one might wish to examine such a complex. By work of Stallings (see \cite{MS}) a simple closed curve $c$ in a surface $S$ gives a $\Z$-splitting of $\pi_1(S) = A \ast_{\langle c \rangle} B$ or $\pi_1(S) = A \ast_{\langle c \rangle}$, and conversely. The definition of $FZ_n$ is constructed so as to mimic this way of thinking of $\mathcal{C}(S)$.

We show the following:

\begin{prethm}
The cyclic splitting complex $FZ_n$ is $\delta$-hyperbolic.
\end{prethm}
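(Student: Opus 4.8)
The plan is to follow the Kapovich--Rafi strategy, with the free splitting complex $FS_n$ as the hyperbolic source and $FZ_n$ in place of $FB_n$. By the Handel--Mosher theorem $FS_n$ is $\delta$-hyperbolic, and there is the natural map $\phi\colon FS_n\to FZ_n$. The crucial simplification here is that $FS_n$ and $FZ_n$ have the \emph{same} vertex set, the one-edge free splittings of $F_n$, so $\phi$ is the identity on vertices; moreover $\phi$ carries edges to edges, since every edge of $FS_n$ (a pair of one-edge splittings with a common refinement) is, by definition, an edge of $FZ_n$. Hence $\phi$ is surjective and $1$-Lipschitz, and $FZ_n$ is connected. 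To invoke Bowditch's criterion as packaged in \cite{KapovichRafi} (Theorem 2 below) it then remains only to find a constant $M$ such that, for all vertices $x,x'$ of $FS_n$ with $d_{FZ_n}(\phi(x),\phi(x'))\le 1$, every $FS_n$-geodesic $[x,x']$ has $\operatorname{diam}_{FZ_n}\phi([x,x'])\le M$. Because $\phi$ is $1$-Lipschitz, $\operatorname{diam}_{FZ_n}\phi([x,x'])\le d_{FS_n}(x,x')$, so it suffices to bound $d_{FS_n}(x,x')$ uniformly whenever $x$ and $x'$ span an edge of $FZ_n$.

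If $x$ and $x'$ span a common-refinement edge of $FZ_n$, they already span an edge of $FS_n$, so $d_{FS_n}(x,x')=1$. The whole argument therefore reduces to the following statement about the new ``folding'' edges, which is the heart of the matter: \textbf{there is a constant $M=M(n)$ such that if $x,x'$ are one-edge free splittings, $w$ lies in a vertex group of each, and some $\Z$-splitting $T$ with edge group $\langle w\rangle$ is obtained from both $x$ and $x'$ by folding $\langle w\rangle$ over trivial edge groups, then $d_{FS_n}(x,x')\le M$.}

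To prove this I would translate the folding moves into Bass--Serre trees. Folding the unique orbit of trivial edge of $x$ with one of its $w$-translates exhibits $T$ as a collapse of a two-edge graph-of-groups decomposition---one edge carrying $\langle w\rangle$, one edge trivial---which also collapses onto $x$; unwinding this, and separating the amalgam and HNN forms of $T$, yields a normal form, e.g. $x=P\ast U$ with $w\in P$ and $T=P\ast_{\langle w\rangle}(\langle w\rangle\ast U)$, and likewise $x'=P'\ast U'$ with $w\in P'$ and $T=P'\ast_{\langle w\rangle}(\langle w\rangle\ast U')$. Since both displays describe the same $\Z$-tree $T$, its unordered pair of vertex groups $\{P,\langle w\rangle\ast U\}$ equals $\{P',\langle w\rangle\ast U'\}$ up to conjugacy, leaving a short list of cases. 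If $P$ is conjugate to $\langle w\rangle\ast U'$ (and $P'$ to $\langle w\rangle\ast U$), then $x$ and $x'$ are both collapses of a single free splitting supported on the free-product decomposition $\langle w\rangle\ast U\ast U'$, so $d_{FS_n}(x,x')\le 2$; if instead $P$ is conjugate to $P'$, then $U$ and $U'$ are two free complements of the rank-one free factor $\langle w\rangle$ inside a common subgroup, and one produces a short explicit path in $FS_n$ from $P\ast U$ to $P\ast U'$. Carrying this through in all cases---including the HNN case, the cases where $w$ is a proper power or is not primitive in the relevant vertex group, and the small-rank degeneracies where a vertex group collapses---gives the uniform bound $M$.

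I expect this Main Lemma to be the only genuine obstacle; the remaining verifications for the Kapovich--Rafi criterion are formal. Inside the lemma the delicate points will be (i) making the enumeration of what ``fold $\langle w\rangle$ over trivial edge groups'' can produce genuinely exhaustive, and (ii) checking that replacing one free complement of a rank-one free factor by another, or re-attaching the $\langle w\rangle$-edge within a vertex group, is always a bounded move in $FS_n$. Once $M$ is in hand, Theorem 2 yields at once that $FZ_n$ is $\delta$-hyperbolic.
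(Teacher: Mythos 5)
Your reduction fails at the decisive step. You reduce everything to the ``Main Lemma'' that an edge of the second type in $FZ_n$ forces $d_{FS_n}(x,x')\le M$ for a uniform $M$, and then invoke the $1$-Lipschitz property of $\phi$ to bound $\operatorname{diam}\phi([x,x'])$ by the length of $[x,x']$. That lemma is false, and its falsity is essentially the point of the whole construction: if every $FZ_n$-edge had endpoints at uniformly bounded $FS_n$-distance, the vertex-identity $FS_n\to FZ_n$ would be a quasi-isometry, whereas Proposition 3 of the paper shows it is not. Concretely: take a surface $S$ with $\pi_1(S)\cong F_n$ and a nonseparating simple closed curve $C$ such that $\Sigma=S\setminus C$ carries a pseudo-Anosov $\phi$. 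The $\Z$-splitting $T=A\ast_{\langle C\rangle}$ unfolds (Shenitzer--Stallings--Swarup, Theorem 4) to a free splitting $X$, and since $\phi$ fixes $T$, each $\phi^k(X)$ also edge-folds onto $T$; hence $d_{FZ_n}(X,\phi^k(X))\le 1$ for all $k$, while $d_{FS_n}(X,\phi^k(X))\to\infty$ because $\phi$ acts hyperbolically on $FS_n$ (its attracting lamination fills). The delicate point you flag yourself --- that replacing one free complement $U$ of $\langle w\rangle$ (or of the factor filled by $w$) by another is a ``bounded move'' in $FS_n$ --- is exactly what breaks: the group of automorphisms of $\langle w\rangle\ast U$ fixing $w$ moves $U$ arbitrarily far in the free splitting complex.

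The correct argument must therefore bound $\operatorname{diam}_{FZ_n} f([x,x'])$ even though $[x,x']$ has unbounded length, i.e.\ it must exhibit a path in $FS_n'$ from $x$ to $x'$ that is \emph{uniformly close to a geodesic} and whose $f$-image stays in a bounded $FZ_n$-neighborhood of $x$. The paper does this with Handel--Mosher folding paths: writing $X=A\ast B$, $Y=A\ast B'$ with $w$ filling $A$ and $B\ast\langle w\rangle=B'\ast\langle w\rangle$ (after Lemma 1), it builds roses $R,R'$ adapted to these decompositions and a sequence of maximal Stallings folds $R'=\Gamma_0\to\cdots\to\Gamma_N=R$, ordered so that each intermediate $\Gamma_i$ yields a splitting $A\ast B_i$ with $B_i\ast\langle w\rangle=B'\ast\langle w\rangle$, whence $d_{FZ_n}(f(\Gamma_i),X)\le 3$. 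Since maximal folding paths are unparametrized quasi-geodesics in the hyperbolic complex $FS_n'$ (Theorem 3), the geodesic $[X,Y]$ fellow-travels this path and its $f$-image is uniformly bounded. Your algebraic normal-form analysis of which pairs $(x,x')$ fold to a common $T$ is a reasonable start (and close in spirit to Lemma 1 of the paper), but it cannot terminate in a distance bound in $FS_n$; it has to feed into a quasi-geodesic whose image you control in $FZ_n$.
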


In further work, the author wishes to describe the elements of $Out(F_n)$ which act hyperbolically on $FZ_n$. In particular, a description of these elements should show that $FZ_n$ is not $Out(F_n)$-equivariantly quasi-isometric to the free factor complex.

\subsection*{Acknowledgements} The author thanks Mladen Bestvina, Kasra Rafi, and Patrick Reynolds for their immense patience and for enlightening conversations. 

\section{A Bowditch Hyperbolicity Condition for Graphs }
A \emph{graph} is a connected 1-dimensional simplicial complex. If $X$ and $Y$ are graphs, a $\emph{graph map}$ is a continuous map $f: X \> Y$ such that vertices map to vertices. As always, the vertex set of a graph X is denoted by $V(X)$, and the edge set by $E(X)$. From now on, whenever considering a (connected) simplicial complex $Z$ as a metric space, we mean the  1-skeleton of $Z$ with the simplicial metric. We denote a geodesic path from a vertex $x$ to a vertex $y$ by $[x,y]$. 

In \cite{Bowditch} Bowditch develops a criterion for a graph to be $\delta$-hyperbolic. Similar criteria were applied in the Masur-Minsky proof that the curve complex is hyperbolic. Bowditch defines, for constants $B_1, B_2 > 0$ a \emph{$(B_1,B_2)$-thin triangles structure} in a graph $X$, which is a set of paths $g_{xy}$ between any $x,y \in X$ satisfying some ``thinness" conditions (see \cite{KapovichRafi} for details). Bowditch proves the following useful condition for checking hyperbolicity in \cite{Bowditch}:

\begin{thm}[Bowditch]
Suppose $X$ is a connected graph. If there are $B_1,B_2$ such that $X$ has a $(B_1,B_2)$-thin triangles structure, then there are $\delta > 0$ and $H > 0$ (depending on $B_1, B_2$) such that $X$ is $\delta$-hyperbolic, and every geodesic path from $x$ to $y$ in $X$ is $H$-close to $g_{xy}$. 
\end{thm}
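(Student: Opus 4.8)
The plan is to reduce the statement to a single \emph{stability} estimate: it suffices to produce a constant $H = H(B_1, B_2)$ such that for all vertices $x, y$ the path $g_{xy}$ and any geodesic $[x,y]$ are within Hausdorff distance $H$ of each other. Granting this, the ``$H$-close'' half of the conclusion is immediate, and hyperbolicity follows by a one-line computation: for any $x, y, z$ the thin-triangles axiom gives $g_{xy} \subseteq N_{B_2}(g_{xz} \cup g_{zy})$, hence $[x,y] \subseteq N_H(g_{xy}) \subseteq N_{H + B_2}(g_{xz} \cup g_{zy}) \subseteq N_{2H + B_2}([x,z] \cup [z,y])$, so every geodesic triangle is $(2H+B_2)$-slim; and a connected graph whose geodesic triangles are uniformly slim is $\delta$-hyperbolic. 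Thus the whole content of the theorem is the stability estimate.

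To obtain the stability estimate I would work in two stages. \emph{Stage 1: a logarithmic bound.} The thin-triangles structure forces $g_{xy}$ to be coarsely connected --- one can list it as a sequence of vertices $x = v_0, v_1, \dots, v_k = y$ with consecutive vertices at uniformly bounded distance, and $\operatorname{diam}(g_{vw}) \le B_1$ whenever $d(v,w) \le 1$. Fix $x,y$ with $d(x,y) = R$, choose a geodesic $[x,y]$, and run a dyadic recursion: if $m$ is the vertex of $[x,y]$ halfway along, then every point of $g_{xy}$ lies within $B_2$ of $g_{xm} \cup g_{my}$; iterating $\lceil \log_2 R\rceil$ times reaches pairs $u,v$ of vertices of $[x,y]$ with $d(u,v) \le 1$, where the short-path axiom contributes only $B_1$. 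This gives $g_{xy} \subseteq N_{\kappa(R)}([x,y])$ with $\kappa(R) = B_2\lceil \log_2 R\rceil + B_1$, and the mirror-image bisection of the path $g_{xy}$ itself gives $[x,y] \subseteq N_{\kappa(R)}(g_{xy})$. Already the computation of the previous paragraph then shows that geodesic triangles of perimeter $\le R$ are $O(\log R)$-slim, so by the standard local-to-global principle that $o(R)$-slim geodesic triangles force hyperbolicity, $X$ is $\delta$-hyperbolic for some $\delta$; but this route does not produce the \emph{uniform} constant $H$, and so does not finish the proof.

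\emph{Stage 2: bootstrapping $\log R$ down to a constant.} This is the heart of the matter. I would argue by contradiction together with an extremal choice: assuming no uniform $H$ exists, among all data $(x, y, p)$ with $p$ a vertex of $g_{xy}$ realising a large ``defect'' $d(p, [x,y]) > r$ (and symmetrically with $p$ on $[x,y]$ and defect $d(p, g_{xy})$), select one with $d(x,y)$ minimal; split $[x,y]$ at its midpoint $m$ and apply $g_{xy} \subseteq N_{B_2}(g_{xm} \cup g_{my})$ to manufacture new data with strictly smaller $d(x,y)$ and defect only about $B_2$ less; carrying the recursion down to scale $d(\cdot,\cdot) \le 1$, where the defect is at most $B_1$, bounds the original defect, and feeding the resulting bound back into the sub-data and iterating then collapses the logarithmic $\kappa(R)$ of Stage 1 to a quantity depending only on $B_1$ and $B_2$. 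The delicate point --- the step I expect to absorb most of the work --- is choosing the defect functional and organising this induction so that the minimal-counterexample recursion closes with a genuinely useful inequality rather than merely reproducing the logarithmic estimate. Everything else (coarse connectedness of the $g$-paths, the two bisection recursions, and the deduction of hyperbolicity from slim geodesic triangles) is routine and uses only the definitions together with standard facts about hyperbolic graphs.
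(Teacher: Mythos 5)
The paper does not prove this statement---it is quoted from Bowditch's work and used as a black box---so your proposal has to be judged against the argument in the literature rather than against anything in the text. Your reduction of the theorem to a uniform two-sided stability estimate is correct, and Stage 1 is fine: the dyadic bisection giving $g_{xy}\subseteq N_{\kappa(R)}([x,y])$ with $\kappa(R)=B_2\lceil\log_2 R\rceil+B_1$ (and its mirror image) is exactly the standard opening move in every proof of this criterion.

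The gap is Stage 2, which is the entire content of the theorem and is not actually carried out. The minimal-counterexample recursion you describe cannot close as stated: splitting $[x,y]$ at its midpoint reduces the defect by at most $B_2$ while only halving the scale, so running the recursion down to scale $1$ recovers precisely the loss $B_2\log_2 R+B_1$ of Stage 1. You flag this worry yourself, but no mechanism is offered to avoid it; ``choosing the defect functional'' is not a detail to be absorbed later, it is the missing idea. The known proofs do something genuinely different: Bowditch deduces hyperbolicity not by improving the slimness estimate but via the subquadratic isoperimetric inequality criterion---the dyadic decomposition fills a loop of length $\ell$ by $O(\ell\log\ell)$ cells whose perimeters are bounded in terms of $B_1$ and $B_2$, and $\ell\log\ell$ is subquadratic, hence $X$ is hyperbolic---and only afterwards extracts the uniform constant $H$, using hyperbolicity itself (in particular the exponential divergence of geodesics) rather than a bootstrapped version of the logarithmic bound. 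Your Stage 1 aside that $o(R)$-slim triangles force hyperbolicity is in fact a closer relative of the correct route than Stage 2 is; but even granting hyperbolicity from it, you would still owe an argument that the paths $g_{xy}$---which are not known to be quasi-geodesics and may be very long---lie at uniformly bounded Hausdorff distance from geodesics, and that argument is not supplied. As written, the proposal establishes the logarithmic estimate and nothing more.
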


The following proposition, which is proved as a corollary of the above theorem in \cite{KapovichRafi}, will be the main technical tool:

\begin{thm}
Suppose $X$ and $Y$ are connected graphs, $X$ is $\delta$-hyperbolic, and $f : X \> Y$ is $L$-Lipschitz for some $L \geq 0$. Suppose there is $S \subseteq V(X)$ such that
\begin{enumerate}
\item $f(S) = V(Y)$ 
\item $S$ is $D$-dense in $V(X)$ for some $D \geq 0$. 
\item There is an $M > 0$ such that if $x,y \in S$ with $d(f(x),f(y)) \leq 1$ then $diam(f[x,y]) \leq M$. 
\end{enumerate}
Then $Y$ is $\delta\pr$-hyperbolic for some $\delta\pr$.
\end{thm}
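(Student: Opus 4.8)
The plan is to apply Theorem 1 (Bowditch's criterion) to the graph $Y$: I would manufacture a $(B_1,B_2)$-thin triangles structure on $Y$ out of the geodesics of $X$, pushed forward by $f$, and then read off hyperbolicity of $Y$. By hypothesis (1), for every vertex $v\in V(Y)$ I may fix a vertex $\sigma(v)\in S$ with $f(\sigma(v))=v$, i.e.\ a section $\sigma\colon V(Y)\to S$ of $f$. Given $u,v\in V(Y)$, choose a geodesic $[\sigma(u),\sigma(v)]$ in $X$ and let $g_{uv}$ be its image $f([\sigma(u),\sigma(v)])$, reparametrized as an edge path in $Y$ from $u$ to $v$; since $f$ is a graph map and $L$-Lipschitz, this is a genuine path joining $u$ to $v$ (of length at most $L\, d_X(\sigma(u),\sigma(v))$). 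One takes $g_{uu}$ constant, and may arrange $g_{vu}$ to be the reverse of $g_{uv}$ by reversing the chosen geodesic, so as to meet any symmetry requirement in the definition of a thin triangles structure.

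There are then two estimates to verify. First, the ``short paths are short'' condition: if $d_Y(u,v)\le 1$ then $\sigma(u),\sigma(v)\in S$ and $d_Y(f(\sigma(u)),f(\sigma(v)))=d_Y(u,v)\le 1$, so hypothesis (3) gives $\operatorname{diam}_Y(g_{uv})=\operatorname{diam}_Y(f[\sigma(u),\sigma(v)])\le M$, and a chain of at most $d_Y(u,v)$ edges controls $\operatorname{diam}_Y(g_{uv})$ in general; so $B_1$ of order $M$ works. Second, the thin-triangle condition: for $u,v,w\in V(Y)$ the geodesic triangle on $\sigma(u),\sigma(v),\sigma(w)$ in $X$ is $\delta$-thin, so every vertex of $[\sigma(u),\sigma(v)]$ is within $\delta$ of $[\sigma(v),\sigma(w)]\cup[\sigma(w),\sigma(u)]$; applying the $L$-Lipschitz map $f$, every vertex of $g_{uv}$ lies within $L\delta$ of $g_{vw}\cup g_{wu}$, hence $g_{uv}\subseteq N_{L\delta+1}(g_{vw}\cup g_{wu})$, and $B_2$ of order $L\delta$ suffices. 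Hypothesis (2), that $S$ is $D$-dense in $V(X)$, is what lets one pass from hypothesis (3) — stated only for pairs of vertices in $S$ — to a bound governing the behavior of $f$ along the interiors of these geodesics, whose vertices need not lie in $S$: one replaces a geodesic segment by $S$-points that $D$-shadow it and chains (3) along consecutive $S$-points, absorbing the resulting constants into $B_1,B_2$.

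With a $(B_1,B_2)$-thin triangles structure in hand, Theorem 1 produces $\delta'=\delta'(B_1,B_2)$ for which $Y$ is $\delta'$-hyperbolic, which is the conclusion. The part I expect to be most delicate is bookkeeping against the precise definition of a thin triangles structure from \cite{Bowditch, KapovichRafi}: one must check that the pushed-forward geodesics satisfy \emph{all} the axioms (in particular the exact form of the ``thinness'' inequalities and the symmetry relating $g_{uv}$ to $g_{vu}$), and it is precisely in reconciling the $S$-local hypothesis (3) with the global thin-triangle inequality that hypothesis (2) does its work. None of the individual estimates is hard — they are $L$-Lipschitz push-forwards of $\delta$-thinness and applications of (3) — but the three constants $L$, $\delta$, $M$ and the density constant $D$ must be tracked carefully through the construction.
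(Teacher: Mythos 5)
The paper gives no proof of this statement---it is quoted from \cite{KapovichRafi}, where it is derived exactly as you propose, as a corollary of Bowditch's criterion (Theorem 1) by pushing forward $X$-geodesics between chosen $S$-preimages of vertices to obtain a thin-triangles structure on $Y$; your two verifications (hypothesis (3) giving the diameter bound for adjacent pairs, and $\delta$-thinness in $X$ composed with $L$-Lipschitzness giving the triangle condition) are the correct and essentially complete core of that argument. The only soft spot is your account of hypothesis (2): as you have set things up, neither Bowditch axiom visibly uses density at all (hypothesis (3) already bounds $\operatorname{diam} f[x,y]$ for the \emph{whole} geodesic, so no chaining along interior points is needed), and in \cite{KapovichRafi} density serves rather to control $f$ on geodesics between arbitrary vertices of $X$ rather than only $S$-points, so the vague ``chain along $D$-shadowing $S$-points'' remark should either be made precise or dropped.
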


\section{The Free Splitting Complex} 

A \emph{tree} is a simply connected graph. An \emph{action} of a group $G$ on a tree $T$, denoted $G \acts T$, is a homomorphism from $G$ to the group of simplicial automorphisms of $T$. An action $G \acts T$ is called \emph{minimal} if there is no proper $G$ invariant subtree of $T$.

Let $F_n$ denote the free group on $n$-generators. Recall from Bass-Serre theory that a minimal action $F_n \acts T$ with trivial edge stabilizers gives a a graph of groups decomposition of $F_n$ with trivial edge groups (and hence a free splitting), and conversely. We shall often refer to the action $F_n \acts T$ as a free splitting, as there will be no confusion. A \emph{k-edge splitting} refers to a free splitting whose associated graph of groups decomposition consists of $k$ edges. Two splittings $F_n \acts T$ and $F_n \acts T\pr$ are \emph{equivalent} if there exists an $F_n$ equivariant homeomorphism $T \> T\pr$.

An equivariant map $f: T \> T\pr$ between minimal $F_n$-trees is called a \emph{collapse map} if the preimage of any point is connected.

Define the \emph{free splitting complex of $F_n$}, denoted $FS_n$ as follows. For a more complete discussion see \cite{HandelMosher}. A vertex of $FS_n$ is an equivalence class of 1-edge free splittings. Two vertices $X,Y \in V(FS_n)$ are connected by an edge if there exists a two edge splitting $T$ and $F_n$-equivariant collapse maps $T \> X$ and $T \> Y$. We say $T$ is a \emph{common refinement} of $X$ and $Y$. A $k$-simplex in $FS_n$ is  a collection of $k+1$ vertices $X_1, \ldots, X_{k+1}$ such that there exists a $k+1$ edge splitting $T$ and $F_n$-equivariant collapse maps $T \> X_i$ for each $i = 1, \ldots, k+1$.

Denote by $FS\pr_n$ the barycentric subdivision of $FS_n$. This is a simplicial complex whose vertices correspond to free splittings of $F_n$, and where there is an edge between vertices $T$ and $T\pr$ if there is an equivariant collapse map $T \> T\pr$ or $T\pr \> T$. The complexes $FS_n$ and $FS\pr_n$ are finite dimensional, connected, and have an action of $Out(F_n)$ by simplicial automorphisms such that the quotient is compact (see \cite{HandelMosher}).

\section{Folding Paths in $FS_n$}

For a general definition of folding paths in $FS\pr_n$, see \cite{HandelMosher}. We need only special types of folding paths between splittings in $CV_n$ which are also discussed in \cite{KapovichRafi}, but we will give an explanation here as well following the treatment there.

Let $T$ be a tree. Vertices of valence $\geq 3$ are \emph{natural vertices}, and connected components of $T \setminus \{\text{natural vertices}\}$ are \emph{natural edges}.

A \emph{rose} $R_n$ is a graph with one vertex and $n$-edges. Given an identification $F_n = \pi_1(R_n)$, a \emph{marking} of a graph $\Gamma$ is a homotopy equivalence $f: \Gamma \> R_n$. Two markings $f: \Gamma \> R_n$ and $f\pr: \Gamma\pr \> R_n$  are equivalent if there is a homeomorphism $\phi:\Gamma\pr \> \Gamma$ such that $f\pr \phi \simeq f\pr$. In particular, an equivalence class of markings gives an isomorphism $\pi_1(\Gamma) \> F_n$.

Let $\beta$ be a basis for $F_n$. As in \cite{KapovichRafi} define a \emph{$\beta$-graph} to be a graph $\Gamma$ with a function $\mu: E(\Gamma) \> \beta \cup \beta^{-1}$ such that if $e$ is an oriented edge of $\Gamma$ and $\bar{e}$ denotes the edge with the opposite orientation, then $\mu(\bar{e}) = \mu(e)^{-1}$. Let $R_\beta$ be the rose whose (oriented) edges are labelled by elements of $\beta$ and their inverses. This labeling gives an identification of $F_n$ with $\pi_1(R_\beta)$.

The labeling of a $\beta$-graph $\Gamma$ determines a map $\Gamma \> R_\beta$ by sending each edge of $\Gamma$ to the edge of $R_\beta$ with the same label. In particular, if this map $\Gamma \> R_\beta$ is a homotopy equivalence, then the labeling of $\Gamma$ gives a marking.

\begin{remark}
A marking of $\Gamma$ corresponds to an action of $F_n$ on the universal cover $\tilde{\Gamma}$, and hence to a point in $FS\pr_n$. Equivalent markings define the same vertex in $FS\pr_n$. We will use $\Gamma$ to denote the vertex in $FS_n\pr$ determined by the marking, hopefully without any confusion.
\end{remark}

\subsection*{Stallings folds} Let $\Gamma$ be a $\beta$-graph such that there exists two edges $e_1$ and $e_2$ with the same initial vertex and such that $\mu(e_1) = \mu(e_2)$. We obtain another $\beta$-graph $\Gamma\pr$ by identifying the edges $e_1$ and $e_2$, and labeling the resulting edge by $\mu(e_1) = \mu(e_2)$. This is called a \emph{Stallings fold} (see \cite{Stallings}). There is a quotient map $\Gamma \> \Gamma\pr$ which is call a \emph{fold map}. Note that if $e_1$ and $e_2$ have distinct terminal vertices, then $\Gamma \> \Gamma\pr$ is a homotopy equivalence.

Suppose that $\Gamma$ is a $\beta$-graph and that the labeling gives a marking $\Gamma \> R_\beta$. If we have two edges in $\Gamma$ with the same initial vertex and label, we can construct another graph $\Gamma\pr$ from $\Gamma$ by a Stallings fold, and the marking $\Gamma \> R_\beta$ factors as $\Gamma \> \Gamma\pr \> R_\beta$. Furthermore, the map $\Gamma\pr \> R_\beta$ is again a marking.

\subsection*{Maximal folds} Suppose $\Gamma$ is a $\beta$-graph and that there exist two edges $e_1$ and $e_2$ in $\Gamma$ with the same initial edge and such that $\mu(e_1) = \mu(e_2)$. Let $\hat e_1$ and $\hat{e_2}$ be natural edges containing $e_1$ and $e_2$. Then $\hat e_1$ and $\hat e_2$ contain maximal initial segments $\tilde e_1$ and $\tilde e_2$ which are labeled by the same word in $\beta$. Therefore we can obtain another graph $\Gamma\pr$ by identifying the segments $\tilde e_1$ and $\tilde e_2 $. We say $\Gamma\pr$ is obtained by a \emph{maximal Stallings fold} or just a maximal fold.

\subsection*{Foldable Maps and Handel-Mosher Folding Paths}

Let $\Gamma$ be a $\beta$-graph, and $f : \Gamma \> R_\beta$ given by the labeling is a marking. We say that the map $f$ is \emph{foldable} if 
\begin{enumerate}
\item For every vertex $v$ of valence 2, the edges $e_1$ and $e_2$ with initial vertex $v$ have $\mu(e_1) \neq \mu(e_2)$.

\item For every natural vertex $v$, there exist three edges $e_1$, $e_2$, and $e_3$ with the same initial vertex $v$ such that $\mu(e_1)$, $\mu(e_2)$ and $\mu(e_3)$ are pairwise unequal. 
\end{enumerate} 

There are a few important properties about foldable maps and maximal folds which we need:
\begin{itemize}
\item A map $\Gamma \> R_\beta$ is foldable in the sense above if and only if the corresponding map between $F_n$-trees $\tilde{\Gamma} \> \tilde{R_\beta}$ is foldable in the sense of Handel-Mosher in \cite{HandelMosher}.
\item If $\Gamma$ is a $\beta$-graph and $\Gamma \> R_\beta$ is foldable, and if $\Gamma\pr$ is obtained from $\Gamma$ by a maximal fold, the induced map $\Gamma\pr \> R_\beta$ is foldable.

\item If $\Gamma \> \Gamma\pr$ is a maximal fold, then $d_{FS\pr_n}(\Gamma,\Gamma\pr) \leq 2$.

\item If $\Gamma \> R_\beta$ is a marking then there exists a finite sequence of maximal folds $$\Gamma = \Gamma_0 \> \Gamma_1 \> \ldots \> \Gamma_N = R_\beta$$

\end{itemize}

The proofs of these are elementary and found in \cite{HandelMosher}. We will also need to following result:

\begin{thm}[Handel-Mosher  \cite{HandelMosher}]
The path in $FS\pr_n$ given by connecting each $\Gamma_i$ and $\Gamma_{i+1}$ by an edge path of length $\leq 2$ is an unparametrized quasi-geodesic in $FS\pr_n$.
\end{thm}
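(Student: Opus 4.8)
The plan is to follow the argument of Handel--Mosher \cite{HandelMosher}. Write $p$ for the edge path in $FS\pr_n$ obtained from the maximal-fold sequence $\Gamma_0 \to \Gamma_1 \to \cdots \to \Gamma_N = R_\beta$ by joining consecutive $\Gamma_i$ with segments of length $\le 2$, which exist by the bulleted facts above. To show $p$ is an unparametrized quasigeodesic I would, by a standard Masur--Minsky / Bowditch type argument of the kind behind Theorems~1 and~2 (cf.\ \cite{KapovichRafi}), reduce the problem to constructing a \emph{coarse projection to the index set}: a map $\pi : V(FS\pr_n) \to \{0,1,\dots,N\}$ and a constant $C = C(n)$ with (i) $|\pi(\Gamma_i) - i| \le C$ for all $i$, and (ii) $|\pi(Z) - \pi(Z\pr)| \le C$ whenever $d_{FS\pr_n}(Z,Z\pr) \le 1$. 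Granting $\pi$, one runs the usual argument: for $\Gamma_a,\Gamma_b$ on $p$, apply $\pi$ to the successive vertices of a geodesic $[\Gamma_a,\Gamma_b]$ in $FS\pr_n$; by (ii) the values change by at most $C$ per step, and with (i) this gives $|b-a| \le C\,d_{FS\pr_n}(\Gamma_a,\Gamma_b) + 2C$, while $d_{FS\pr_n}(\Gamma_a,\Gamma_b) \le 2|b-a|$ is immediate from the construction of $p$. Thus $i \mapsto \Gamma_i$ is a quasi-isometric embedding of $\{0,\dots,N\}$ into $FS\pr_n$, and since $p$ fills in consecutive images by segments of length $\le 2$, $p$ is an unparametrized quasigeodesic.

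To build $\pi$, I would track along the fold sequence of $F_n$-trees $\tilde\Gamma_0 \to \cdots \to \tilde\Gamma_N$ how a fixed splitting $Z \in V(FS\pr_n)$ is gradually ``folded over.'' Because a maximal fold is a quotient map that only identifies initial segments of natural edges and never separates points, any $F_n$-equivariant combinatorial datum on $\tilde\Gamma_i$ witnessing compatibility with $Z$ --- for concreteness, an edge or turn of $\Gamma_i$ that is ``legal'' with respect to a suitable comparison with $Z$ --- can be destroyed as $i$ increases but never resurrected. One then sets $\pi(Z)$ to be, up to bounded ambiguity, the last index $i$ at which such a witness survives. The one-sided nature of folding is what makes $\pi(Z)$ coarsely well defined, and it immediately gives $\pi(\Gamma_i) \ge i - O(1)$ since $\Gamma_i$ is trivially compatible with itself.

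The real content --- and the step I expect to be the main obstacle --- is establishing (i) and (ii) with uniform constants. Property (i) requires the \emph{definite progress} statement: after a bounded number of maximal folds the surviving-witness data of every $\Gamma_i$ must be exhausted, i.e.\ the fold path cannot ``stall'' near a point of $FS\pr_n$. This is precisely Handel--Mosher's main technical achievement, and proving it requires a quantitative analysis of how natural edges are subdivided and then identified along a maximal-fold sequence, with the uniform bound coming from the finiteness of the relevant local combinatorial types in rank $n$ (the fact that both endpoints $\Gamma_0$ and $\Gamma_N = R_\beta$ lie in Culler--Vogtmann Outer space keeps this analysis in its cleanest range). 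Property (ii), the coarse Lipschitz bound, is the familiar delicate point in arguments of this kind: one reduces it to a local statement describing how a single maximal fold interacts with a single elementary modification --- collapse or refinement --- of a splitting, and again extracts a uniform constant from finiteness of local types. Assembling (i) and (ii) into the projection $\pi$ then yields the theorem via the reduction of the first paragraph.
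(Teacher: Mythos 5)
The paper offers no proof of this statement; it is imported wholesale from Handel--Mosher and used as a black box, so there is nothing in-paper to compare against and your sketch must stand on its own. It does not: beyond the reduction in your first paragraph, every substantive step (the construction of $\pi$, the ``definite progress'' bound, the coarse Lipschitz bound) is explicitly deferred to ``Handel--Mosher's main technical achievement,'' so no new argument is actually supplied.

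More seriously, the reduction itself aims at the wrong target and its hypotheses are unsatisfiable. Properties (i) and (ii) together force $N \leq C\, d_{FS\pr_n}(\Gamma_0,\Gamma_N) + 2C$: apply (ii) along a geodesic from $\Gamma_0$ to $\Gamma_N$ to get $|\pi(\Gamma_0)-\pi(\Gamma_N)| \leq C\, d_{FS\pr_n}(\Gamma_0,\Gamma_N)$, and combine this with $\pi(\Gamma_0)\leq C$ and $\pi(\Gamma_N)\geq N-C$ from (i). In other words you would be proving that $i\mapsto\Gamma_i$ is a \emph{parametrized} quasi-geodesic. That is false in general: the number of maximal folds $N$ is governed by the combinatorial volume of $\Gamma_0$ as a $\beta$-graph (roughly the total length of its edge labels) and is not coarsely bounded by $d_{FS\pr_n}(\Gamma_0,R_\beta)$ --- for instance, take $\Gamma_0$ to be the image of $R_\beta$ under a large power of an automorphism whose word-length growth is unbounded but whose translation length on $FS_n$ is zero. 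This is exactly why the theorem asserts only an \emph{unparametrized} quasi-geodesic. A viable version of your scheme must project onto the image of the path in $FS\pr_n$ rather than onto the index set, and ask only for a coarse retraction that coarsely preserves the order of the $\Gamma_i$ (plus, for the Masur--Minsky route, a contraction property); alternatively one follows Handel--Mosher, who obtain hyperbolicity of $FS\pr_n$ and the quasi-geodesic property of fold paths simultaneously from their combing construction. Your ``witnesses destroyed but never resurrected'' idea is the right germ for coarse order-preservation, but it cannot deliver your property (i).
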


\section{The Cyclic Splitting Complex $FZ_n$}

First, let $F_n \acts T$ be a free splitting. Let $v$ be a vertex of $T$, and let $G_v$ be the stabilizer of $v$ in $F_n = G$. Suppose $w \in G_v$ and let $\langle w \rangle$ denote the cyclic subgroup generated by $w$. Construct a new $F_n$-tree $T$ as follows: choose an edge $e$ with initial vertex $v$. Then for every $\gamma \in G$, identify $\gamma e$ with its orbit under the conjugate $\langle \gamma w \gamma^{-1} \rangle \subseteq G_{\gamma v}$.

The resulting tree $T\pr$ corresponds to a graph of groups decomposition with an edge group $\langle w \rangle$. In particular, if $T$ is a one edge free splitting, then $T\pr$ is a one edge splitting with cyclic edge group. See figures 1 and 2 below.

We say $T\pr$ is obtained from $T$ by an equivariant \emph{edge fold}. The natural map $T \> T\pr$ is called an \emph{edge folding map}.

\begin{ex}
Suppose $F_4 = \langle a,b,c,d \rangle$. Consider the one-edge free splitting $A \ast B$ given by $A = \langle a,b \rangle$ and $B = \langle c,d \rangle$. Then the one edge $\Z$-splitting $A \ast_{\langle [a,b] \rangle} \langle B,[a,b]\rangle$ is obtained from $A \ast B$ by an edge fold.
\end{ex}

It is a theorem of Bestvina and Feighn (see Lemma 4.1 in \cite{OuterLimits}) that any $\Z$-splitting can be ``unfolded."

\begin{thm}[Bestvina-Feighn \cite{OuterLimits}]
Let $\Gamma$ be a graph of groups decomposition of the free group $F_n$ with cyclic edge groups. Then either all of the edge groups of $\Gamma$ are trivial or there exists an edge $e$ with stabilizer $G_e \isom \Z$ and a vertex $v$ which is an endpoint of $e$ such that the inclusion $i : G_e \> G_v$ has image a free factor of $G_v$ and furthermore for any edge $e\pr$ incident at $v$ the image of $G_{e\pr} \> G_v$ lies in a complementary free factor.
\end{thm}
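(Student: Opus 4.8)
The plan is to reduce to the case of a single $\Z$-edge and then descend through the graph of groups. We may assume $\Gamma$ is minimal. If every edge group is trivial we are done; otherwise fix an edge $e$ with $G_e=\langle c\rangle\isom\Z$ and an endpoint $v$ of $e$. Collapsing all edges of $\Gamma$ other than $e$ yields a one-edge $\Z$-splitting $F_n=\mathbb A\ast_{\langle c\rangle}\mathbb B$ or $F_n=\mathbb A\ast_{\langle c\rangle}$, where the vertex group $\mathbb A$ on the $v$-side is the fundamental group $\pi_1(\Gamma_A)$ of a sub-graph-of-groups $\Gamma_A$ with $G_v\le\mathbb A$ and strictly fewer edges than $\Gamma$. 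The main induction is on the number of edges of $\Gamma$: granted that $\langle c\rangle$ is a suitably complemented free factor of $\mathbb A$ (or of $\mathbb B$), I would feed the corresponding side $\Gamma_A$ back into the inductive hypothesis to locate the required edge and vertex inside $\Gamma_A\subseteq\Gamma$, invoking the elementary fact that a subgroup of a free factor $G_1$ of a free product $G_1\ast G_2$ which is itself a free factor of $G_1\ast G_2$ is already a free factor of $G_1$ (with an amalgamated variant in case $\Gamma_A$ still carries $\Z$-edges).

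For the single-edge case the first step is a centralizer argument. Since $F_n$ is free, $Z_{F_n}(c)$ is infinite cyclic, and because $c$ is elliptic in the Bass--Serre tree a generator of $Z_{F_n}(c)$ is elliptic as well. Using $\mathbb A\cap\mathbb B=\langle c\rangle$ one checks that $c$ cannot be a proper power in both $\mathbb A$ and $\mathbb B$: roots of $c$ taken on the two sides would both lie in $Z_{F_n}(c)$, forcing one of them into $\mathbb A\cap\mathbb B=\langle c\rangle$. So, after relabelling, $c$ is not a proper power in $\mathbb B$ (and similarly for the relevant leg of $\langle c\rangle$ in the HNN case), and the goal becomes to unfold $e$ at the $\mathbb B$-vertex.

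The crux is then the following assertion, which is essentially Shenitzer's theorem together with its HNN refinement: if $F_n=\mathbb A\ast_{\langle c\rangle}\mathbb B$ is free and $c$ is not a proper power in $\mathbb B$, then $\langle c\rangle$ is a free factor of $\mathbb B$ (and in the HNN case the second leg of $\langle c\rangle$ lands in a complementary free factor, which yields the complementarity clause of the statement). I would prove this by induction on $\rank(F_n)=n$. An Euler-characteristic count gives $\rank(\mathbb A)+\rank(\mathbb B)=n+1$, and Mayer--Vietoris identifies $H_1(F_n)$ with the cokernel of the homomorphism $\Z\to H_1(\mathbb A)\oplus H_1(\mathbb B)$ determined by $c$; torsion-freeness of this cokernel --- the first genuine use of the hypothesis that $F_n$ is actually free --- constrains the divisibility of the homology class of $c$. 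One then upgrades this homological information, together with $c$ not being a proper power in $\mathbb B$, to the statement that $\langle c\rangle$ is a free factor of $\mathbb B$ by a peak-reduction (Whitehead-automorphism) argument applied to a counterexample of minimal rank: a Whitehead move either realizes $c$ as part of a basis of $\mathbb B$ or lowers $n$.

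The main obstacle is this last upgrade, ``homologically primitive and not a proper power'' $\Rightarrow$ ``free factor''; the Whitehead/peak-reduction bookkeeping is where all the real work sits, and the crude invariants (Euler characteristic, $H_1$) provably fall short of it. A secondary difficulty is the descent in the first paragraph: if the edge and vertex returned by the inductive hypothesis for $\Gamma_A$ sit at the vertex of $\Gamma_A$ to which $e$ is attached, one must arrange the complement of the new edge group to contain the image of $G_e$; the cleanest fix is to run the whole induction with a relative statement in which each vertex group is equipped with the peripheral structure given by its incident edge groups.
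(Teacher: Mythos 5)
This statement is quoted by the paper from Bestvina--Feighn (Lemma 4.1 of \emph{Outer Limits}) and the paper offers no proof of it, so there is nothing internal to compare against; I can only assess your plan on its own terms. As a plan it correctly identifies the two layers (a descent through the graph of groups, resting on a one-edge Shenitzer/Swarup-type unfolding statement), but the one-edge step --- which you yourself flag as ``where all the real work sits'' --- is not only left open, the route you propose to it is broken. The intermediate claim ``$F_n=A\ast_{\langle c\rangle}B$ free and $c$ not a proper power in $B$ $\Rightarrow$ $\langle c\rangle$ is a free factor of $B$'' is false: take $F_3=\langle a,b,d\rangle$, $c=[a,b]$, $A=\langle a,b\rangle$, $B=\langle c\rangle\ast\langle d\rangle$. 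Then $F_3=A\ast_{\langle c\rangle}B$, and $c$ is neither a proper power nor part of a basis of $A$, even though it is visibly a free factor of $B$. So ``not a proper power on a given side'' does not let you choose in advance the side on which the edge unfolds, and your centralizer argument in the second paragraph therefore cannot do the job you assign it. The genuine content of the theorem is precisely the existence of \emph{some} edge and \emph{some} endpoint at which unfolding works, together with the complementarity of the other incident edge groups; the standard proofs get this by folding an equivariant morphism from a free $F_n$-tree onto the Bass--Serre tree of $\Gamma$ and analyzing the terminal fold (Stallings' method, and essentially Bestvina--Feighn's), or by Whitehead peak reduction performed relative to the whole splitting rather than on the single element $c$. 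Your proposal contains neither.

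A secondary point: the slogan ``homologically primitive and not a proper power $\Rightarrow$ free factor,'' which you name as the upgrade to be established by peak reduction, is also false as a general statement about elements of free groups (again $[a,b]\cdot d$-type examples, or already well-known non-primitive elements of $F_2$ with primitive homology class), so the Euler characteristic and Mayer--Vietoris computations cannot be the right inductive invariants; you acknowledge this, but it means the plan does not actually contain the idea that makes the theorem true. The descent step in your first paragraph (inducting on the number of edges with a relative/peripheral version of the statement) is reasonable and close in spirit to how one does deduce the full graph-of-groups statement from the one-edge case, but without a correct one-edge input it does not get off the ground.
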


In particular, Theorem 4 generalizes an earlier theorem of Shenitzer, Stallings, and Swarup (see \cite{Shenitzer}, \cite{StallingsGtrees}, \cite{Swarup}) that any $\Z$-splitting $A \ast_\Z B$ is obtained by edge folding from a free splitting as in the above example. \\

Define a complex $FZ_n$, the \emph{cyclic splitting complex of $F_n$} as follows. The vertices of $FZ_n$ are 1-edge free splittings of $F_n$. Free splittings $X$ and $Y$ are connected by an edge if
\begin{itemize}
\item  there exists a 2-edge splitting and $F_n$-equivariant collapse maps $T \> X$ and $T \> Y$.
\item  there exists a $\Z$-splitting $T$ and equivariant edge folds $X \> T$, $Y \> T$.
\end{itemize}

Distinct vertices $X_1, \ldots, X_{k+1}$ define a $k$-simplex if each $X_i$ and $X_j$ with $i \neq j$ are pairwise adjacent. 

Note that there is a natural inclusion $i: FS_n \> FZ_n$. If two free splittings are connected by an edge in $FS_n$, then their images are also connected by an edge of the first type in $FZ_n$. $Out(F_n)$ acts on $FZ_n$ by simplicial automorphisms in the obvious way.

We can now extend this map $i$ to a map $f$ from the barycentric subdivision of $FS_n$ to $FZ_n$ as follows: A vertex $V$ of $FS_n\pr$ is a $k$-edge splitting of $F_n$. Define $f(V)$ to be the splitting obtained by collapsing all edges but one to a point. The map is only coarsely well-defined, but for any choice of edge in $V$, the 1-edge splittings obtained will be at most distance 1 apart. Then extend to a graph map from $FS_n\pr \> FZ_n$.

Furthermore, $f$ restricts to $i$ on the vertices of $FS_n$ (these are already 1-edge splittings), and $f$ is clearly 1-Lipschitz as well. We will need the following useful lemma.

\begin{figure}
	
	\centering
	\def\svgwidth{\columnwidth}
	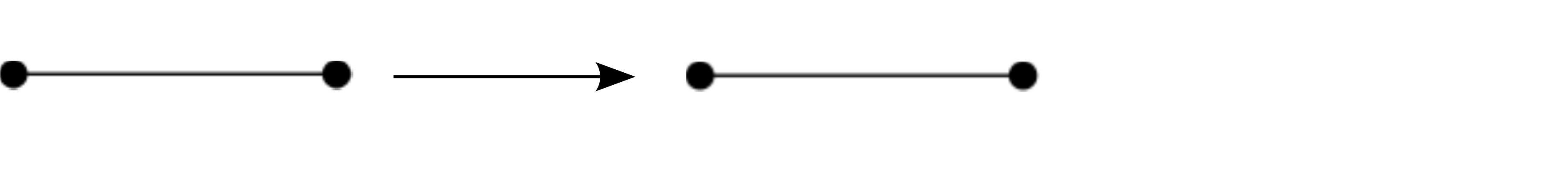
	\caption{Folding a two-vertex splitting}
\end{figure}

\begin{figure}
	\centering
	\def\svgwidth{300pt}
	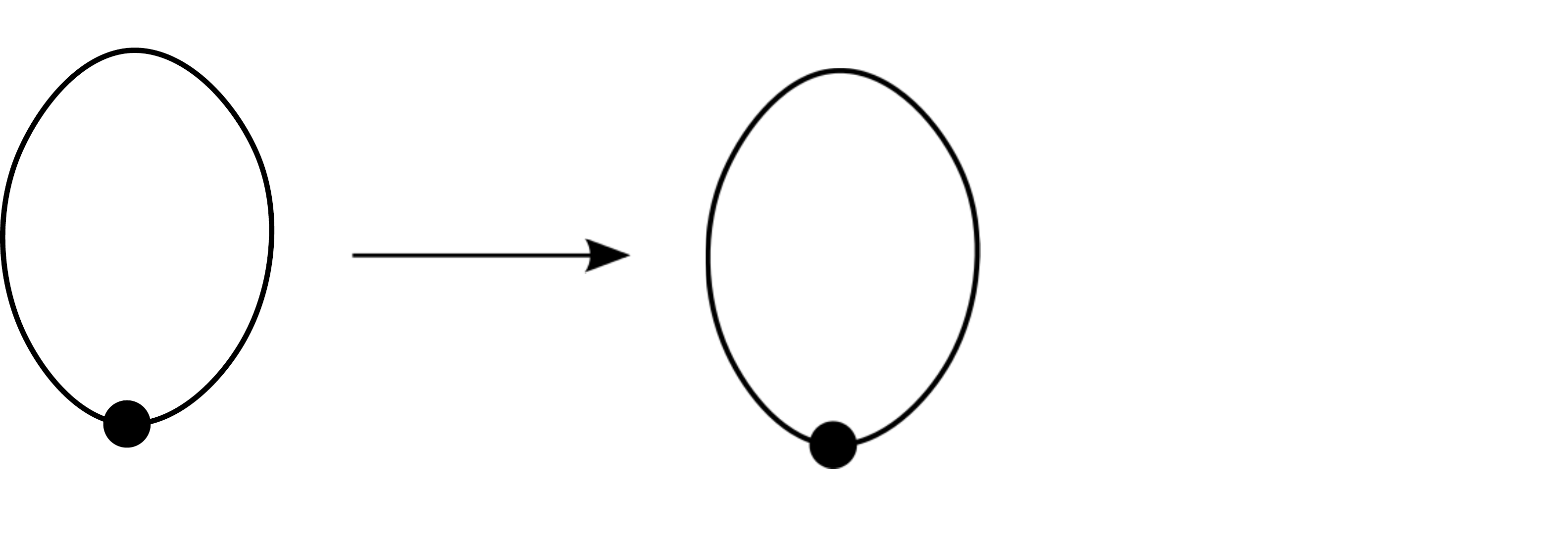
	\caption{Folding a one-vertex splitting. The non-trivial loop represents the element $t$.}
\end{figure} 

\begin{lemma}
Suppose $X$ and $Y$ are one-edge, two-vertex splittings connected by an edge of the second type in $FZ_n$. Then there exist vertices $X\pr$ and $Y\pr$ with $d(X,X\pr) \leq 1$ and $d(Y,Y\pr) \leq 1$ such that $d(X\pr,Y\pr)\leq 1$ and which share a vertex group.
\end{lemma}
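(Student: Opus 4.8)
The plan is to unwind the definition of an edge fold to pin down precisely what $X$, $Y$, and the connecting $\Z$-splitting $T$ look like, and then to write down $X'$ and $Y'$ by hand as one-edge free splittings having a visible common refinement with $X$ and with $Y$ respectively. To fix notation, write $X = A \ast B$ with $A, B \ne 1$. Since $X$ and $Y$ are joined by an edge of the second type there is a $\Z$-splitting $T$ with an edge fold $X \to T$, and after relabelling $A$ and $B$ this fold is carried out at the $A$-vertex over a cyclic subgroup $\langle w \rangle \le A$, so that $T = A \ast_{\langle w \rangle} (B \ast \langle w \rangle)$. We may assume $w \ne 1$ and $\langle w \rangle \subsetneq A$, since otherwise $T$ fails to be a genuine one-edge $\Z$-splitting and a short check shows $Y = X$, whereupon there is nothing to prove. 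Let $V_1 = A$ and $V_2 = B \ast \langle w \rangle$ denote the two vertex groups of $T$, realised as the stabilisers of the endpoints of a fixed edge $\tilde e$ of $T$ whose own stabiliser is $\langle w \rangle$.

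The next step is to match this with the second fold. Write $Y = C \ast D$ with $C, D \ne 1$; after relabelling, the edge fold $Y \to T$ is performed at the $C$-vertex, and conjugating $Y$ (which leaves its vertex of $FZ_n$ unchanged) we may assume this fold carries its folded edge onto $\tilde e$. An edge fold leaves the stabiliser of its folding vertex unchanged while enlarging the stabiliser of the opposite endpoint by the cyclic group one folds over; that cyclic group is therefore $\mathrm{Stab}(\tilde e) = \langle w \rangle$, and the two endpoints of $\tilde e$ acquire the stabilisers $C$ and $D \ast \langle w \rangle$. Comparing with $V_1 = A$ and $V_2 = B \ast \langle w \rangle$ leaves exactly two cases. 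In the first, $C = A$ and $D \ast \langle w \rangle = B \ast \langle w \rangle$; then $Y = A \ast D$ already shares the vertex group $A$ with $X = A \ast B$, so we simply take $X' = X$ and $Y' = Y$, for which $d(X', Y') = d(X, Y) \le 1$ by hypothesis.

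In the second case, $C = B \ast \langle w \rangle$ and $D \ast \langle w \rangle = A$, so $\langle w \rangle$ is a free factor of $A$ with complement $D$ and hence $F_n = A \ast B = (D \ast \langle w \rangle) \ast B = D \ast B \ast \langle w \rangle$. Here we take $X' = Y' := Z$ with $Z = (D \ast B) \ast \langle w \rangle$, which is a genuine one-edge two-vertex free splitting, hence a vertex of $FZ_n$. It then has a common refinement with each of $X$ and $Y$: collapsing the two edges of the $2$-edge free splitting whose successive vertex groups are $\langle w \rangle, D, B$ produces $(D \ast \langle w \rangle) \ast B = X$ and $\langle w \rangle \ast (D \ast B) = Z$, and collapsing those of the $2$-edge free splitting whose successive vertex groups are $\langle w \rangle, B, D$ produces $(B \ast \langle w \rangle) \ast D = Y$ and $Z$. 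Thus $d(X, Z) \le 1$ and $d(Y, Z) \le 1$, while $X' = Y'$ trivially share a vertex group.

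I expect the real work to lie in the middle step: reconciling the two edge-fold descriptions of the single tree $T$ — in particular justifying the conjugation of $Y$ that aligns the two fold images, and extracting from the definition of an edge fold that the over-folded subgroup must be $\mathrm{Stab}(\tilde e)$ and that precisely the two listed cases can occur. Once the vertex and edge groups of $X$, $Y$, and $T$ are pinned down, the first case is immediate and the common-refinement verifications in the second are routine graph-of-groups computations; the degenerate possibilities $w = 1$ and $\langle w \rangle = A$ are disposed of at the outset.
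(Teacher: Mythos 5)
Your proof is correct, but it takes a genuinely different route from the paper's. The paper's (very terse) argument lets $A$ be the \emph{smallest} free factor of $F_n$ containing $\langle w\rangle$ (so $w$ fills $A$); since $A$ is then a free factor of whichever vertex group of $X$ (resp.\ $Y$) contains $w$, one gets refinements producing $X'=A\ast B$ and $Y'=A\ast B'$ adjacent to $X$ and $Y$ and sharing the vertex group $A$. You instead compare the two edge-fold descriptions of the single tree $T$ at a fixed edge $\tilde e$ and split into the ``aligned'' case (where $X$ and $Y$ already share a vertex group and you take $X'=X$, $Y'=Y$) and the ``transverse'' case (where $\langle w\rangle$ is forced to be a rank-one free factor and the single splitting $\langle w\rangle\ast(D\ast B)$ is adjacent to both). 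Your version is more explicit and, unlike the paper's one-line proof, actually verifies $d(X',Y')\le 1$ and that only two matchings can occur. What the paper's choice buys, however, is the extra property that $w$ \emph{fills} the shared vertex group $A$: this is not part of the lemma's statement but is exactly what is invoked when the lemma is applied in Case 1 of the hyperbolicity proof (``we may assume $X=A\ast B$ and $Y=A\ast B'$, and that the element $w$ fills $A$''). Your aligned case gives the full original vertex group of $X$ as the shared factor, which need not be filled by $w$, so substituting your proof into the paper would require reworking that reduction (e.g.\ by further refining along the smallest free factor containing $w$, which is essentially the paper's step). Aside from that mismatch with the downstream application, the argument is sound; the degenerate possibilities $w=1$ and $\langle w\rangle=A$ you set aside indeed cannot occur for a genuine minimal one-edge $\Z$-splitting.
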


\begin{proof}
Let $\langle w \rangle$ be the edge group of the $\Z$-splitting to which $X$ and $Y$ fold, and let $A$ be the smallest free factor containing $\langle w \rangle$ (in which case, we say that the element $w$ \emph{fills} $A$). Then there exist $X\pr = A \ast B$ and $Y\pr = A \ast B\pr$ such that $X$ and $X\pr$, and $Y$ and $Y\pr$ share a common refinement.
\end{proof}

\section{Hyperbolicity of $FZ_n$}

We use the map $f : FS_n\pr \> FZ_n$ and the method pioneered in \cite{KapovichRafi} to prove the following theorem:

\begin{thm}
The cyclic splitting complex $FZ_n$ is $\delta$-hyperbolic.
\end{thm}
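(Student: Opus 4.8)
The plan is to apply Theorem 2 (the Kapovich--Rafi criterion) to the map $f : FS_n\pr \> FZ_n$. We already know from the excerpt that $FS_n\pr$ is $\delta$-hyperbolic (Handel--Mosher, Theorem 3), that $f$ is $1$-Lipschitz, and that $f$ restricts to the natural inclusion $i$ on the vertices of $FS_n$. So the task reduces to producing a set $S \subseteq V(FS_n\pr)$ verifying conditions (1), (2), (3) of Theorem 2. The natural choice is to take $S$ to be the set of $1$-edge free splittings, i.e.\ $S = V(FS_n) \subseteq V(FS_n\pr)$. Then $f(S) = V(FZ_n)$ by construction (condition (1)), and $S$ is $1$-dense in $V(FS_n\pr)$ since every vertex of $FS_n\pr$ is a $k$-edge splitting which collapses onto some $1$-edge splitting (condition (2)). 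The entire content of the theorem is therefore condition (3): I must find $M > 0$ so that whenever $X, Y \in S$ satisfy $d_{FZ_n}(f(X), f(Y)) \leq 1$, the diameter in $FZ_n$ of the image under $f$ of a geodesic $[X,Y]$ in $FS_n\pr$ is at most $M$.

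To verify condition (3), split into cases according to why $X$ and $Y$ are adjacent in $FZ_n$. If they are equal, or joined by an edge of the first type (a common refinement), then they are already at distance $\leq 1$ in $FS_n\pr$ itself, so a geodesic $[X,Y]$ in $FS_n\pr$ has length $\leq 1$ and its $f$-image has diameter $\leq 1$. The real work is the second type of edge: $X$ and $Y$ fold onto a common $\Z$-splitting $T$ with edge group $\langle w\rangle$. Here I would invoke Lemma 1 (for the two-vertex case) and its one-vertex analogue to replace $X$ and $Y$ by nearby splittings $X\pr = A \ast B$ and $Y\pr = A \ast B\pr$ sharing a vertex group $A$, where $A$ is the smallest free factor filled by $w$; the distances $d(X, X\pr)$ and $d(Y, Y\pr)$ are $\leq 1$ in $FS_n$, hence in $FS_n\pr$, so it suffices to bound $\mathrm{diam}(f[X\pr, Y\pr])$. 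Now I would construct an explicit folding path in $FS_n\pr$ from $X\pr$ to $Y\pr$ that is adapted to the shared free factor $A$: realize both as $\beta$-graphs with a fixed subgraph carrying $A$ and perform maximal Stallings folds only in the complementary part, so that every intermediate splitting $\Gamma_i$ still admits an edge fold onto a $\Z$-splitting with edge group (a conjugate of) $\langle w\rangle$ — or at least collapses to a $1$-edge splitting that is $FZ_n$-adjacent to $X\pr$. By Theorem 3 this folding path is an unparametrized quasi-geodesic in $FS_n\pr$, so it fellow-travels the geodesic $[X\pr,Y\pr]$ up to the constant $H$ from Theorem 1; applying the $1$-Lipschitz map $f$ and using that consecutive $\Gamma_i$'s all map to $FZ_n$-vertices within bounded distance of $f(X\pr)$, I get the desired uniform bound $M$.

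The main obstacle I expect is precisely this last construction: showing that one can route a folding path from $X\pr$ to $Y\pr$ whose entire $f$-image stays boundedly close, in the $FZ_n$ metric, to the single vertex $f(X\pr)$. The subtlety is that an arbitrary folding path between $A\ast B$ and $A\ast B\pr$ need not preserve the free factor $A$ as a visible subgraph, and intermediate splittings might a priori wander far in $FZ_n$. The fix is to be careful about the initial foldable representative: choose a marked graph in which a core subgraph $\Delta$ with $\pi_1(\Delta) = A$ is embedded, choose $w$ to be an immersed loop in $\Delta$, and check that maximal folds performed outside $\Delta$ (or folds that do not touch the part of $\Delta$ carrying $w$) leave intact an edge fold onto a $\Z$-splitting with edge group $\langle w\rangle$, so that each $\Gamma_i$ has $d_{FZ_n}(f(\Gamma_i), f(X\pr)) \leq 2$ or so. Once that is in hand, the rest is the formal bookkeeping of Theorem 2, and $FZ_n$ is $\delta\pr$-hyperbolic. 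A secondary, purely technical point is handling the one-vertex (ascending/HNN) case of Lemma 1 in parallel with the two-vertex case, but the argument is the same with $A\ast\langle w^t\rangle$ in place of $A\ast B$, using Figure 2.
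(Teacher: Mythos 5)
Your proposal follows exactly the paper's argument: apply the Kapovich--Rafi criterion (Theorem 2) to $f: FS_n\pr \> FZ_n$ with $S$ the set of $1$-edge splittings, reduce to condition (3) for type-2 edges, use Lemma 1 to arrange a shared vertex group $A$, and then construct a Handel--Mosher folding path between roses adapted to $A$ so that each intermediate splitting stays within bounded $FZ_n$-distance of $f(X)$, concluding via Theorem 3 and the hyperbolicity of $FS_n\pr$. The ``main obstacle'' you flag is precisely what the paper resolves by its bookkeeping of the two fold types (folding an entire $w$, resp.\ $w^t$, before proceeding), in both the segment and loop cases, so your plan matches the paper's proof in both strategy and detail.
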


\begin{proof}
Let $S$ be the set of $1$-edge splittings in $FS_n$. Conditions $(1)$ and $(2)$ of Theorem 2 are clearly satisfied.

Since $FS_n$ is $\delta$-hyperbolic by \cite{HandelMosher}, by Theorem 2 it suffices to show condition (3) is true: that there exists an $M > 0$ such that for any 1-edge free splittings $X$ and $Y$, if $d_{FZ_n}(f(X),f(Y)) \leq 1$, then $diam(f[X,Y]) \leq M$. 

Suppose $X$ and $Y$ are 1-edge free splittings of $F_n$ which are joined by an edge of the second type in $FZ_n$ (note that it suffices to cover this case because an edge of type 1 corresponds to being distance 1 in $FS\pr_n$). Suppose $T$ is the $\Z$-splitting such that there exist edge folds $X \> T$ and $Y \> T$. 

There are two cases to cover: (1) the graph of groups of the $\Z$-splitting $T$ is a segment and (2) is a loop:
\subsubsection*{Case 1}
By lemma 1, choosing splittings at most distance 1 away we may assume $X = A \ast B$ and $Y = A \ast B\pr$, and that the element $w$ fills $A$. Then the condition that $X$ and $Y$ fold to $T$ is exactly the condition that $B \ast \langle w \rangle = B\pr \ast \langle w \rangle$. Let $R$ and $R\pr$ be free splittings defined as follows: both have underlying graphs which are roses, and the loops of $R$ represent elements of bases of $A$ and $B$; in particular choose a basis $\{a_1, \ldots, a_k\}$ of $A$ and a basis $\{b_{1}, \ldots, b_l\}$ of $B$ and label the edges of $R$ by the collection of $a_i$'s and $b_j$'s. Denote the basis of $F_n$ formed by this collection by $\beta$. 

Choose a basis $\{b_1\pr, \ldots, b_l\pr \}$ for $B\pr$ and define $R\pr$ as the rose whose edges represent the elements in the basis $\beta\pr = \{a_1, \ldots, a_k, b_1\pr, \ldots b_l\pr \}$. Label the edges of $R\pr$ by the elements of $\beta\pr$ written in the basis $\beta$ (subdividing the edges of $R\pr$ as necessary) so that both are $\beta$-graphs. Note that we see a subgraph labelled by the $a_i$'s in both. This gives a homotopy equivalence $R\pr \> R$. By perhaps conjugating, we may assume that the map $R\pr \> R$ is foldable; indeed $R\pr \> R$ fails to be foldable exactly when the word labeling each edge starts and ends with some $a_i$, so by conjugating every label we can be sure this does not happen without changing the splitting. Note that $R$ (resp. $R\pr)$ has $d(f(R),X) \leq 1$ (resp. $d(f(R\pr),Y) \leq 1$).

Then choose a Handel-Mosher folding path $R\pr = \Gamma_0 \> \Gamma_1 \> \cdots \> \Gamma_N = R$ as follows: recall that $B \ast \langle w \rangle = B\pr \ast \langle w \rangle$ so that the basis elements $b\pr_1, \ldots, b\pr_j$ written in terms of $\beta$ are just words in $w$ and $b_1, \ldots, b_l$. Each maximal fold $\Gamma_i \> \Gamma_{i+1}$ occurs as one of the following two types, either (1) fold a loop labelled by some $b\pr_j$ over a letter $a_i$ in the word $w$ or (2) fold maximal initial segments of two distinct loops labelled by some $b_i$ and $b_j$. We require to fold an entire $w$ before moving on to a fold of the second type: if we do a maximal fold of type 1 and fold only a proper subword of $w$, then after this fold we still see natural edges with the same initial label. By \cite{HandelMosher}, regardless of the order in which the edges are folded, we still end up at $R$, so we continue doing maximal folds until we have folded out the entire word $w$.

In particular, each type of fold (1) or (2) either leaves the splitting $f(\Gamma_{i+1})$ (coarsely) equal to $f(\Gamma_{i})$ or it gives another splitting $A \ast B_i$ within distance $1$ of $f(\Gamma_i)$ so that $B_i \ast \langle w \rangle = B\pr \ast \langle w \rangle$. In either case, at each step of the folding path $\Gamma_i$, we have $d(f(\Gamma_i), X) \leq 3$.

\subsubsection*{Case 2} Suppose the vertex group of $X$ is $A \ast B$ and the vertex group of $Y$ is $A \ast B\pr$, where $A$ is the smallest free factor containing $\langle w \rangle$. $X$ and $Y$ are adjacent to a common $\Z$-splitting $T$ exactly when $A \ast B \ast \langle w^t \rangle = A \ast B\pr \ast \langle w^t \rangle$, where $t$ is the element of $F_n$ corresponding to the non-trivial loop in the graph of groups, and $\langle w \rangle$ is the edge group of $T$. 

We follow the same basic outline as in the segment case: choose splittings $R$ and $R\pr$ as follows: Let $\{a_1, \ldots, a_k\}$ be a basis of $A$, and $\{b_{1}, \ldots, b_l\}$ a basis of $B$. Let $R$ be the splitting with underlying graph a rose and whose edges are labelled by the elements in the basis $\beta = \{a_1, \ldots, a_k, t, b_{1}, \ldots, b_l\}$. Choose a basis $\{b\pr_1, \ldots, b\pr_l\}$ for $B\pr$ and let $\beta\pr = \{a_1, \ldots, a_k, t, b\pr_{1}, \ldots, b\pr_l\}$. We choose $R\pr$ to be the rose whose edges are represent the elements in the basis $\beta\pr$. Label the edges of $R\pr$ by this elements of the basis $\beta\pr$ written in the letters of $\beta$ so that both $R$ and $R\pr$ become $\beta$-graphs.

By conjugating, we may assume that the homotopy equivalence $R\pr \> R$ given by the markings is foldable. Also recall that $A \ast B \ast \langle w^t \rangle = A \ast B\pr \ast \langle w^t \rangle$ so that every basis element $b\pr_i$ is written as a word in $A \ast B \ast \langle w^t \rangle$. Thus, there is a Handel-Mosher folding path $R\pr \> \Gamma_0 \> \Gamma_1 \> \ldots \> \Gamma_N = R$ all of whose maximal folds $\Gamma_i \> \Gamma_{i+1}$ either (1) fold an edge labelled by some $b\pr_i$ over a edge whose label is a letter in the word $w^t$ or (2) folds an initial segment of an edge labelled by $b_i$ with the initial segment of a different edge labelled by $a_j$ or $b_j$. As above, with the first type of fold we make sure to fold an entire word $w^t$ before moving on. In both cases, either the splitting $f(\Gamma_{i+1})$ is within distance 1 of $f(\Gamma_{i})$ or $f(\Gamma_i)$ is distance 1 from a  1-edge splitting whose vertex group is of the form $A \ast B_i \ast \langle w^t \rangle$.

In either case, for each $i$ we have $d(f(\Gamma_i),X) \leq 3$. \\

By \cite{HandelMosher}, since each of the maps $\Gamma_i \> \Gamma_{i+1}$ are maximal folds, $d(\Gamma_i,\Gamma_{i+1}) \leq 2$. Since the map $f$ is Lipschitz, this implies that the image of this folding path is contained in a bounded neighborhood of the splitting $T$. Furthermore, since folding paths are unparametrized quasi-geodesics by Theorem 3, this path is uniformly close to $[R\pr,R]$, and because $FS\pr_n$ is hyperbolic, the geodesic $[X,Y]$ is uniformly close to $[R\pr,R]$. Putting all this together, we see that there exists a constant $M \geq 0$ such that $diam(f[X,Y]) \leq M$.
\end{proof}

\section{Other definitions}

There are a few other candidate complexes that we might have called the cyclic splitting complex. We will show that all these complexes are $Out(F_n)$-equivariantly quasi-isometric. 

Define the complex $\overline{FZ}_n$ as follows: vertices of $\overline{FZ}_n$ are one edge free or $\Z$-splittings of $F_n$. Two such vertices $X$,$Y$ are connected if (1) $X$ and $Y$ are free splittings which admit a common refinement or (2) $X$ can be obtained from $Y$ by an edge fold.

\begin{prop}
$FZ_n$ and $\overline{FZ}_n$ are $Out(F_n)$-equivariantly quasi-isometric. 
\end{prop}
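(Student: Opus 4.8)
The plan is to show that the natural inclusion $\iota\colon FZ_n\to\overline{FZ}_n$, which is the identity on the common vertex set of one-edge \emph{free} splittings, is an $Out(F_n)$-equivariant quasi-isometry. Equivariance is immediate from the construction, so it remains to check that $\iota$ is coarsely Lipschitz, coarsely surjective, and a quasi-isometric embedding. For the first point: a type-$1$ edge of $FZ_n$ (common refinement) is literally a type-$1$ edge of $\overline{FZ}_n$, while a type-$2$ edge $\{X,Y\}$ of $FZ_n$ — witnessed by a $\Z$-splitting $T$ with edge folds $X\to T$ and $Y\to T$ — is sent to the length-two path $X-T-Y$ in $\overline{FZ}_n$, since $T$ is now an actual vertex adjacent to both $X$ and $Y$. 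Hence $\iota$ multiplies distances by at most $2$.

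\textbf{Coarse surjectivity.} The only vertices of $\overline{FZ}_n$ outside the image of $\iota$ are the genuine one-edge $\Z$-splittings. By Theorem 4 (Bestvina--Feighn), every such $T$ is obtained by an edge fold from a one-edge free splitting $Z$, so $d_{\overline{FZ}_n}(T,\iota(Z))\le 1$. Thus $\iota(FZ_n)$ is $1$-dense in $\overline{FZ}_n$.

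\textbf{Quasi-isometric embedding (the crux).} Define a coarse retraction $\rho\colon\overline{FZ}_n\to FZ_n$ by $\rho=\mathrm{id}$ on free-splitting vertices and, for a $\Z$-splitting vertex $T$, $\rho(T)=$ some fixed one-edge free splitting that edge-folds onto $T$ (which exists by Theorem 4); then $\rho\circ\iota=\mathrm{id}$, so it is enough to show $\rho$ sends edges of $\overline{FZ}_n$ to uniformly bounded subsets of $FZ_n$. The key observation is that, for any fixed one-edge $\Z$-splitting $T$, the collection of free splittings that edge-fold onto $T$ is a \emph{clique} in $FZ_n$ (diameter $\le 1$), directly by the definition of the type-$2$ edges of $FZ_n$. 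This immediately handles two of the three cases: (i) if $\{P,Q\}$ is a type-$1$ edge of $\overline{FZ}_n$, then $\rho(P)=P$, $\rho(Q)=Q$ admit a common refinement, so $d_{FZ_n}(\rho(P),\rho(Q))\le 1$; (ii) if $\{X,T\}$ is a type-$2$ edge with $X$ a free splitting and $T$ a $\Z$-splitting, then necessarily $X$ edge-folds onto $T$ (an edge fold over a nontrivial $\langle w\rangle$ cannot produce a free splitting), so $X=\rho(X)$ and $\rho(T)$ both fold onto $T$ and lie in the same clique, whence $d_{FZ_n}(\rho(X),\rho(T))\le 1$.

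\textbf{The remaining case and conclusion.} The only case left — and the one I expect to be the real work — is a type-$2$ edge of $\overline{FZ}_n$ both of whose endpoints are $\Z$-splittings, say $T_2$ obtained from $T_1$ by an edge fold at a vertex $v$ over some $w\in G_v$. Here one must bound $d_{FZ_n}(\rho(T_1),\rho(T_2))$. The plan is to show that $T_1$ and $T_2$ have a common free-splitting neighbour in $\overline{FZ}_n$; by the clique observation applied to $T_1$ and then to $T_2$ this gives $d_{FZ_n}(\rho(T_1),\rho(T_2))\le 2$. Concretely, one unfolds $T_1$ (via Theorem 4) \emph{at the vertex $v$ where the fold $T_1\to T_2$ occurs}, and checks that, after commuting the unfolding past the edge fold, the resulting free splitting also edge-folds onto $T_2$; this is a short graph-of-groups computation in the spirit of Lemma 1 and the case analysis in the proof of Theorem 5. (Equivalently, one shows a geodesic of $\overline{FZ}_n$ between two free splittings may be chosen never to traverse two consecutive $\Z$-vertices, reducing everything to cases (i)--(ii).) With $\rho$ shown coarsely Lipschitz, for free splittings $X,Y$ we get $d_{FZ_n}(X,Y)=d_{FZ_n}(\rho\iota X,\rho\iota Y)$ bounded above by a linear function of $d_{\overline{FZ}_n}(\iota X,\iota Y)$, so together with the first paragraph $\iota$ is a quasi-isometric embedding, and together with coarse surjectivity a quasi-isometry. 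Since $\iota$ is $Out(F_n)$-equivariant, this is the desired $Out(F_n)$-equivariant quasi-isometry.
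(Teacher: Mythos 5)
Your argument is essentially the paper's: the paper also takes the vertex-level identification $FZ_n \to \overline{FZ}_n$, notes that a type-2 edge of $FZ_n$ becomes a length-two path through the common $\Z$-splitting (so the map is $2$-Lipschitz), and goes back by collapsing each $\Z$-vertex on a path in $\overline{FZ}_n$ to a single type-2 edge of $FZ_n$ between its two free neighbours --- precisely your clique observation. The one case you defer as ``the real work'' --- an edge of $\overline{FZ}_n$ joining two $\Z$-splittings --- is vacuous: an edge fold is by definition performed on a \emph{free} splitting and outputs a $\Z$-splitting when $w \neq 1$, so every type-2 edge of $\overline{FZ}_n$ has a free endpoint and your cases (i)--(ii) already exhaust everything. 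With that observation your proof is complete (and rather more carefully written than the paper's own).
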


\begin{proof}
Define a map $\phi : FZ_n \> \overline{FZ}_n$ in the obvious way: send a vertex of $FZ_n$ to the corresponding free splitting in $\overline{FZ}_n$. Extend to a map of the entire complex. $\phi$ is clearly equivariant.

Let $X,Y \in V(FZ_n)$ with $d(X,Y) \leq 1$. Then by definition of $\overline{FZ}_n$, at worst $d(\phi(X),\phi(Y)) \leq 2$. Hence $d(X,Y) \leq 2d(\phi(X),\phi(Y))$. Furthermore, if $d(f(X),f(Y)) \leq 2$, then $X$ and $Y$ are joined by a path of length at most 2. In particular, $$\frac{1}{2}d(X,Y) \leq d(\phi(X),\phi(Y)) \leq 2d(X,Y)$$
so $\phi$ is quasi-isometry as desired.
\end{proof}

There is third complex whose definition more closely resembles the definition of $FS_n$. Define a complex $C_n$ whose vertices are 1-edge free or $\Z$-splittings and where two vertices $X$ and $Y$ are connected by an edge if the corresponding splittings have a 2-edge common refinement.

\begin{prop}
$FZ_n$ and $C_n$ are $Out(F_n)$-equivariantly quasi-isometric.
\end{prop}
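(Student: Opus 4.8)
The plan is to route through the complex $\overline{FZ}_n$ of Proposition 1. Its vertex set is exactly that of $C_n$ --- the $1$-edge free or $\Z$-splittings of $F_n$ --- and it is already known to be $Out(F_n)$-equivariantly quasi-isometric to $FZ_n$. So it suffices to prove that the bijection on vertices induces a quasi-isometry $\overline{FZ}_n \to C_n$; this bijection obviously commutes with the $Out(F_n)$-action and both graph metrics are $Out(F_n)$-invariant, so equivariance is automatic, and composing with the quasi-isometry from Proposition 1 finishes the proof. Concretely, we must bound $d_{C_n}$ and $d_{\overline{FZ}_n}$ against each other for pairs of vertices adjacent in one of the two complexes.

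The inequality $d_{C_n} \leq d_{\overline{FZ}_n}$ is the easy one: every edge of $\overline{FZ}_n$ is realized by a $C_n$-path of length at most $1$. An edge of the first type is a pair of free splittings $X$, $Y$ with a common refinement $T$; then $T$ is in particular a $2$-edge common refinement, so $X$ and $Y$ are adjacent in $C_n$. For an edge of the second type, $Y$ is obtained from a free splitting $X$ by an edge fold over a cyclic group $\langle w\rangle$ contained in a vertex group of $X$. Here I would exhibit an explicit $2$-edge common refinement of $X$ and $Y$ by taking the vertex group of $Y$ at which the edge group $\langle w\rangle$ sits --- which by the edge-fold construction contains $\langle w\rangle$ as a free factor --- and splitting off that $\langle w\rangle$ free factor. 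Collapsing one of the two edges of the resulting splitting re-merges and returns $Y$; collapsing the other returns $X$, via the identity $(\langle w\rangle \ast P)\ast_{\langle w\rangle} Q = P \ast Q$ when $\langle w\rangle \leq Q$. The segment picture of Figure 1 and the loop (HNN) picture of Figure 2 are handled in the same way.

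For the reverse inequality we must show that a $C_n$-edge --- that is, a $2$-edge common refinement $T$ of vertices $X$ and $Y$ carrying cyclic edge groups --- forces $d_{\overline{FZ}_n}(X,Y)$ to be uniformly bounded. If $T$ is already a free splitting, then $X$ and $Y$ are free splittings with a common refinement and hence adjacent in $\overline{FZ}_n$. Otherwise, Theorem 4 (Bestvina--Feighn) guarantees a nontrivial edge of $T$ which is ``unfoldable'', and I would unfold the nontrivial edges of $T$ one at a time until a $2$-edge free splitting $T^{\mathrm{free}}$ is reached. The splitting $T^{\mathrm{free}}$ collapses onto two $1$-edge free splittings $X^{\mathrm{free}}$ and $Y^{\mathrm{free}}$, which are adjacent in $\overline{FZ}_n$ (common refinement $T^{\mathrm{free}}$), while $X$ (respectively $Y$) is recovered from $X^{\mathrm{free}}$ (respectively $Y^{\mathrm{free}}$) by an edge fold, or equals it outright. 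This yields $d_{\overline{FZ}_n}(X,Y) \leq d(X,X^{\mathrm{free}}) + d(X^{\mathrm{free}}, Y^{\mathrm{free}}) + d(Y^{\mathrm{free}}, Y) \leq 3$, and combined with the first inequality gives the quasi-isometry.

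The main obstacle is the bookkeeping in this last step: one has to check that unfolding a single cyclic edge (as provided by Theorem 4) leaves us with a splitting that still has exactly two edges, with one fewer nontrivial edge group and with the remaining edge group unchanged --- this is where the hypothesis in Theorem 4 that the other edges at the relevant vertex map into a complementary free factor gets used --- and that unfolding one edge commutes appropriately with collapsing the other, so that the two collapse maps are carried along to $X^{\mathrm{free}}$ and $Y^{\mathrm{free}}$ as claimed. The vertex-group amalgam identities that make this work are elementary, but they must be verified for both the segment and the HNN forms of a $1$-edge splitting. Once this is done, the quasi-isometry constants and the $Out(F_n)$-equivariance are immediate.
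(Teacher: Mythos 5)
Your proposal is correct and follows essentially the same route as the paper: pass to $\overline{FZ}_n$ via Proposition 1, take the identity on vertices, realize an edge fold $X \to Y$ by the $2$-edge refinement obtained by splitting $\langle w\rangle$ off the relevant vertex group (the paper's ``half-way fold''), and in the reverse direction unfold the cyclic edges of a $2$-edge common refinement using Theorem 4 to get the same bound $d_{\overline{FZ}_n}(X,Y)\leq 3$. The bookkeeping you flag at the end is exactly the case analysis the paper carries out (one case in detail, the rest left to the reader).
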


\begin{proof}
We will actually show that there is an $Out(F_n)$-equivariant quasi-isometry $\phi: \overline{FZ}_n \> C_n$. The vertex sets of $\overline{FZ}_n$ and $C_n$ are the same, so set $\phi$ to be the identity on vertices. Then extend $\phi$ to a map of graphs.

Let $X$ and $Y$ be vertices of $\overline{FZ}_n$ such that $d(X,Y) \leq 1$. Then by folding the edge group $\langle w \rangle$ ``half-way" over the edge of the splitting, we get a two-edge splitting which commonly refines both $X$ and $Y$. More precisely, if $X$ and $Y$ are one-vertex splittings, consider the 2-edge splitting in which the edges are adjacent at both endpoints, the vertex groups are the vertex group of $X$ and the edge group $\langle w \rangle$ of the $\Z$-splitting, and the edge groups are trivial and $\langle w \rangle$. 

Similarly, if $X$ and $Y$ are two-vertex splittings, say $X = A \ast  B$ and $Y = A \ast_{\langle w \rangle} \langle B, w \rangle$, then we can consider the two edge splitting $A \ast_{\langle w\rangle} \langle w \rangle \ast B$ which refines $X$ and $Y$.
Hence $d(\phi(X),\phi(X)) \leq d(X,Y)$.

Now suppose $X$ and $Y$ are cyclic splittings which are commonly refined by a two-edge splitting (so $d(\phi(X),\phi(Y)) \leq 1$). We need to find a uniform $L>0$ such that $d(X,Y) \leq L$. 

Suppose that $X$ is a $\Z$-splitting and $Y$ is a free splitting. Then we can unfold the edge group from $X$ to get a free splitting $X\pr$. Since $X$ and $Y$ are commonly refined, writing down the vertex groups of $X\pr$ we see that $X\pr$ is commonly refined with $Y$, and hence $d(X,Y) \leq 2$ in $\overline{FZ}_n$. We will do one case carefully - the others are similar and left to the reader. Suppose the graphs of groups corresponding to both $X$ and $Y$ are segments. Then the common refinement is $A \ast_{\langle w \rangle} B \ast C$, with $X = A \ast_{\langle w \rangle} (B \ast C)$ and $Y = (A \ast_{\langle w \rangle} B) \ast C$.

Suppose furthermore that $A \ast_{\langle w \rangle} (B \ast C)$ unfolds to $X\pr = A \ast (B\pr \ast C)$. Then $X\pr$ and $Y$ are commonly refined by the two-edge splitting $A \ast B\pr \ast C$ since the factors $A \ast_{\langle w \rangle} B$ and $A \ast B\pr$ are equal. The other cases are similar, so we have the above inequality for $L = 2$. 

Suppose both $X$ and $Y$ are $\Z$-splittings which are commonly refined by a two-edge splitting, say $A \ast_{\langle s \rangle } B \ast_{\langle t\rangle} C$, so in particular $X = (A \ast_{\langle s \rangle } B) \ast_{\langle t\rangle} C$ and $Y = A \ast_{\langle s \rangle} (B \ast_{\langle t\rangle} C)$. By Theorem 4 above, one of the edge groups $\langle s \rangle$ or $\langle t \rangle$ can be unfolded to get a splitting with one trivial edge stabilizer, and one $\Z$-stabilizer. We can then unfold the remaining $\Z$-edge to get a free splitting, say $A\pr \ast B\pr \ast C\pr$ which is a common refinement of the 1-edge free splittings $(A\pr \ast B\pr) \ast C\pr$ and $A\pr \ast (B\pr \ast C\pr)$. Furthermore, we can fold the element $t$ over the edge of the splitting $(A\pr \ast B\pr) \ast C\pr$ to get a $\Z$-splitting equal to $X$ and folding $s$ over the edge of $A\pr \ast (B\pr \ast C\pr)$ we obtain the splitting $Y$.

Hence, $d(X,Y) \leq 3$. Therefore, we have $$\frac{1}{3}d(X,Y) \leq d(\phi(X),\phi(Y)) \leq 3d(X,Y)$$
so $\phi: \overline{FZ}_n \> C_n$ is a quasi-isometry.
\end{proof}

\section*{Remark}

There are natural $Out(F_n)$-equivariant maps: $FS_n \> FZ_n$ discussed above and $FZ_n \> FF_n$, which is given by sending a vertex of $FZ_n$ to one of the edge groups of the corresponding free splittings. There is also a natural map $FS_n \> FF_n$ defined in the same way, which clearly factors through the above maps. It is a priori unclear that these maps are not quasi-isometries.

\begin{prop}
For $n \geq 3$, the natural $Out(F_n)$-equivariant map $FS_n \> FZ_n$ is not a quasi-isometry.
\end{prop}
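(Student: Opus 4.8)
We first reduce the statement. The vertex sets of $FS_n$ and of $FZ_n$ coincide --- both equal the set of $1$-edge free splittings of $F_n$ --- and the natural map $FS_n\>FZ_n$ is the identity on vertices and $1$-Lipschitz. Hence it is a quasi-isometry if and only if there is a constant $C$ with $d_{FS_n}(X,Y)\le C\,d_{FZ_n}(X,Y)+C$ for all vertices $X,Y$. To rule this out it suffices to produce one vertex $X_0$ and vertices $X_j$ ($j\ge 1$) with $d_{FZ_n}(X_0,X_j)\le 1$ for every $j$ but $d_{FS_n}(X_0,X_j)\to\infty$.

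The plan is to use the following family. Write $F_n=\langle x_1,\dots,x_n\rangle$, set $A=\langle x_1,\dots,x_{n-1}\rangle$ (a free factor of rank $n-1\ge 2$) and $b=x_n$, so $F_n=A\ast\langle b\rangle$, and fix an element $z\in A$ that fills $A$ in the sense of Lemma~1 (such $z$ exist since $\rank A\ge 2$; e.g.\ for $n=3$ one may take $z=[x_1,x_2]$). For $j\ge 0$ let $\Phi_j\in\Aut F_n$ fix $x_1,\dots,x_{n-1}$ and send $x_n\mapsto x_n z^{\,j}$, and put
\[
X_0=A\ast\langle b\rangle,\qquad X_j=\Phi_j(X_0)=A\ast\langle b z^{\,j}\rangle .
\]
In the free product $A\ast\langle b\rangle$ the element $bz^j$ is cyclically reduced of syllable length $2$, while every conjugate of $b^{\pm1}$ has syllable length $1$; so for $j\ne 0$ the subgroups $\langle bz^j\rangle$ and $\langle b\rangle$ are non-conjugate, and the $X_j$ are pairwise distinct $1$-edge free splittings.

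Checking $d_{FZ_n}(X_0,X_j)\le 1$ is the easy part. Let $T=A\ast_{\langle z\rangle}\langle b,z\rangle$. This is an essential one-edge $\Z$-splitting: the edge group $\langle z\rangle\isom\Z$ is proper in $A$ (because $z$ fills $A$ and $\rank A\ge 2$) and proper in $\langle b,z\rangle=\langle b\rangle\ast\langle z\rangle\isom F_2$, and $A\ast_{\langle z\rangle}\langle b,z\rangle=\langle A,b\rangle=F_n$ with $A\cap\langle b,z\rangle=\langle z\rangle$. Since $z$ lies in the vertex group $A$ of both $X_0$ and $X_j$, folding $\langle z\rangle$ over the trivial edge exactly as in Example~1 carries $X_0=A\ast\langle b\rangle$ to $A\ast_{\langle z\rangle}\langle b,z\rangle=T$ and carries $X_j=A\ast\langle bz^j\rangle$ to $A\ast_{\langle z\rangle}\langle bz^j,z\rangle=T$ (using $\langle bz^j,z\rangle=\langle b,z\rangle$). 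Thus $X_0$ and $X_j$ are joined by an edge of the second type in $FZ_n$, so $d_{FZ_n}(X_0,X_j)\le 1$ for all $j$.

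The remaining point, the lower bound $d_{FS_n}(X_0,X_j)\to\infty$, is the heart of the matter and the step I expect to require essentially all the work. The difficulty is that $\Phi_1$ is reducible --- it fixes the free factor $A$, hence is not fully irreducible --- and in fact it is only linearly growing, so the standard sources of loxodromic behaviour on $FS_n$ do not apply and an argument special to this element is needed. Note that the bound is not automatic even though $X_0$ and $X_j$ share the vertex group $A$: the natural cheap attempts to connect them --- blow up the $A$-vertex and then collapse part of it together with the rank-$1$ vertex, which replaces $A\ast\langle b\rangle$ by a splitting in which $b$ is joined to a proper free factor of $A$ --- fail, since the analogous moves on $X_j$ produce the corresponding splittings with $bz^j$ in place of $b$, and because $z$ fills $A$ these are not conjugate to the ones coming from $X_0$; and the rank-$1$ vertex cannot itself be subdivided. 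To prove the estimate I would show that the outer class of $\Phi_1$ acts loxodromically on $FS_n$ (equivalently $FS\pr_n$). Concretely, replace each $X_j$ by the rose $R_j=\Phi_j(R)$, where $R$ is the rose with petals $x_1,\dots,x_{n-1},b$; each $R_j$ is within a bounded distance of $X_j$ in $FS\pr_n$. Then apply the folding-path machinery of \cite{HandelMosher}: a Handel--Mosher folding path from $R_j$ to $R$ must unwind the petal labelled $bz^{\,j}$, a cyclically reduced word of length $j\,|z|+1$ with $|z|\ge 2$, and the task is to show --- using that a maximal fold and a unit step in $FS\pr_n$ alter markings only by a bounded amount, and that, because $z$ fills $A$, no letter of $z^{\,j}$ can be absorbed into the petals $x_1,\dots,x_{n-1}$ --- that $d_{FS\pr_n}(R,R_j)\to\infty$. (Alternatively one may pass to the quasi-isometric sphere-complex model of $FS_n$ for $\#_n(S^1\times S^2)$ and bound $d_{FS_n}(X_0,X_j)$ below by a Hempel-type estimate in terms of the geometric intersection number of the spheres representing $X_0$ and $\Phi_j(X_0)$, the $X_j$ being pairwise distinct.) Granting this, the pairs $(X_0,X_j)$ violate every inequality $d_{FS_n}(X,Y)\le C\,d_{FZ_n}(X,Y)+C$, so $FS_n\>FZ_n$ is not a quasi-isometry.
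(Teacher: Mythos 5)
Your reduction and the upper bound are fine: the splittings $X_j=A\ast\langle bz^{\,j}\rangle$ all fold onto $T=A\ast_{\langle z\rangle}\langle b,z\rangle$, so they lie pairwise within distance $1$ in $FZ_n$. The problem is the step you defer as ``the heart of the matter'': the claim $d_{FS_n}(X_0,X_j)\to\infty$ is not merely unproven, it is false. Let $W$ be the one-edge, one-vertex (HNN) free splitting whose graph of groups is a single vertex with group $A$ and a loop with trivial edge group and stable letter $b$. The $2$-edge splitting with vertex groups $A$ and $\{1\}$, a segment edge between them, and a loop at the trivial vertex with stable letter $b$ collapses to both $X_0$ and $W$, so $d_{FS_n}(X_0,W)\le 1$. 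The same construction with stable letter $bz^{\,j}$ gives $d_{FS_n}(X_j,W')\le 1$, where $W'$ is the HNN splitting of $A$ with stable letter $bz^{\,j}$ --- but $W'=W$: since $z^{\,j}\in A$, the Bass--Serre trees agree (both have vertex set $F_n/A$ and edge set $F_n$, with edge $g$ joining $gA$ to $gbA=g(bz^{\,j})A$), and the identity is an equivariant isomorphism. Hence $d_{FS_n}(X_0,X_j)\le 2$ for all $j$, and your family witnesses nothing. This is consistent with the general theory you half-invoke: $\Phi_1$ is a Dehn twist about the $\Z$-splitting $T$, hence linearly growing with no attracting lamination, so by the very Handel--Mosher criterion quoted in the paper (loxodromic on $FS_n$ only if there is a filling attracting lamination) your plan to ``show that $\Phi_1$ acts loxodromically on $FS_n$'' cannot succeed; you even note the linear growth but do not draw the conclusion. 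Your heuristic that the word $bz^{\,j}$ must be ``unwound'' letter by letter conflates distance in $FS_n$ with word length; the collapse/expand moves of $FS_n$ are far more efficient than that, precisely because the loop labelled by the stable letter can be absorbed into the $A$-vertex in one move.

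The paper's proof takes a genuinely different and essentially unavoidable route: it produces an element that is honestly loxodromic on $FS_n$ yet has bounded orbits on $FZ_n$, namely a pseudo-Anosov $\phi$ of the complement $\Sigma$ of a non-separating simple closed curve $C$ in a surface with free fundamental group. Such $\phi$ fixes the $\Z$-splitting $A\ast_C$, so its $FZ_n$-orbits are bounded, and the bulk of the paper's argument is a covering-space computation showing that the attracting lamination of $\phi$ is not carried by any proper free factor, which feeds into the Handel--Mosher loxodromicity criterion. Any repair of your argument would have to replace the twist $\Phi_1$ by an exponentially growing automorphism with a filling lamination that still coarsely preserves a single $\Z$-splitting --- which is exactly what the paper does.
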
 

\begin{proof}
We assume the following result claimed in \cite{HandelMosher}, and which is to appear in second part of their work on the free splitting complex: An element $\phi \in Out(F_n)$ acts hyperbolically on $FS_n$ is there exists an attracting lamination $\Lambda$ of $\phi$ whose support is all of $F_n$, i.e. $\Lambda$ is not carried by a proper free factor. 

Let $S$ be a surface so that $\pi_1(S)$ is free of rank $n \geq 3$, and suppose there exists a (non-separating) simple closed curve $C$ in $S$ such that $S \setminus C = \Sigma$ admits a pseudo-Anasov mapping class. $C$ gives a $\Z$-splitting $\pi_1(S) = A\ast_C$. Such a surface exists for all $n \geq 3$: indeed, for $n \geq 4$, we can take a surface of genus $\geq 2$ with either one or two boundary components. To get $n = 3$, take a twice-punctured torus. We can cut along a curve $C$ to get a 4-times punctures sphere, which admits a pseudo-Anasov homeomorphism.

Let $\phi$ be a pseudo-Anasov mapping class of $\Sigma$. Hence $\phi$ can also be thought of as an outer automorphism of $\pi_1(S)$ which fixes the splitting $A\ast_C$. It remains to show that the expanding lamination $\Lambda$ of $\phi$ is not carried by a proper free factor. Since $\phi$ is pseudo-Anasov, it follows that $\Lambda$ restricted to $\Sigma$ is minimal and filling. 

Suppose not, and let $H$ be a factor which carries a leaf $L$ of $\Lambda$ (and hence carries all of $\Lambda$ by minimality). Let $\tilde{S}$ be the cover of $S$ corresponding to $H$. Since $H$ is finitely generated, there exists a compact subsurface $S_H$ of $\tilde{S}$ which has fundamental group $H$. Let $\tilde{L}$ be a lift of $L$ to $\tilde{S}$ which is contained in $S_H$. Let $\Sigma\pr$ be the smallest subsurface of $S_H$ which $\tilde{L}$ fills. Suppose $\tilde{L}_0$ is any other lift of $L$ which meets $\Sigma\pr$. If $\tilde{L}_0$ is not contained in $\Sigma\pr$, it enters through some boundary component. By Theorem 5.2 in \cite{FLP} (this is for foliations, but the result for laminations is similar), $\tilde{L}_0$ exits through a boundary component of $\Sigma\pr$. In particular, by cutting $\Sigma\pr$ along $\tilde{L}_0$ we would obtain a smaller surface which $\tilde{L}$ fills, a contradiction. Hence, any lift of $L$ meeting $\Sigma\pr$ must be contained in $\Sigma\pr$.

In particular, $\Sigma\pr$ is (homotopic to) a finite cover of $\Sigma$, so $H$ contains a finite index subgroup of $\pi_1(\Sigma) = A$, which is impossible unless $H = F_n$. Indeed, $\pi_1(\Sigma)$ cannot be contained in a proper free factor; looking at Euler characteristics we see $\chi(S) = \chi(\Sigma)$ so $\chi(\tilde{\Sigma}) \leq \chi(S)$, so the rank of $\pi_1(\tilde{\Sigma})$ is at least $n$.
\end{proof}

\bibliographystyle{amsplain}
\bibliography{Refs}{}

\end{document}